\def\ps@pprintTitle{%
 \let\@oddhead\@empty
 \let\@evenhead\@empty
 \def\@oddfoot{}%
 \let\@evenfoot\@oddfoot}
\newcommand{\tnorm}{\@ifstar\@tnorms\@tnorm}
\newcommand{\@tnorms}[1]{%
  \left|\mkern-1.5mu\left|\mkern-1.5mu\left|
   #1
  \right|\mkern-1.5mu\right|\mkern-1.5mu\right|
}
\newcommand{\@tnorm}[2][]{%
  \mathopen{#1|\mkern-1.5mu#1|\mkern-1.5mu#1|}
  #2
  \mathclose{#1|\mkern-1.5mu#1|\mkern-1.5mu#1|}
}
\newcommand{\jump}[1]{\llbracket #1 \rrbracket}
\newtheorem{proposition}{Proposition}
\newdefinition{remark}{Remark}
\newproof{proof}{Proof}
\begin{document}
%------------------------------------------------------------------------------
\begin{frontmatter}
% ------------------------------------------------------------------------------
  \title{An exactly mass conserving space-time embedded-hybridized
    discontinuous Galerkin method for the Navier--Stokes equations on
    moving domains}
%------------------------------------------------------------------------------
  \author[TLH]{Tam\'as L. Horv\'ath\corref{cor1}\fnref{label1}}
  \ead{thorvath@oakland.edu}
  \fntext[label1]{\url{https://orcid.org/0000-0001-5294-5362}}
  \address[TLH]{Department of Mathematics and Statistics, Oakland
    University, U.S.A.}
    
  \author[SR]{Sander Rhebergen\fnref{label2}}
  \ead{srheberg@uwaterloo.ca}
  \fntext[label2]{\url{https://orcid.org/0000-0001-6036-0356}}
  \address[SR]{Department of Applied Mathematics, University of
    Waterloo, Canada}  
%------------------------------------------------------------------------------
  \begin{abstract}
    This paper presents a space-time embedded-hybridized discontinuous
    Galerkin (EHDG) method for the Navier--Stokes equations on moving
    domains. This method uses a different hybridization compared to
    the space-time hybridized discontinuous Galerkin (HDG) method we
    presented previously in (Int. J. Numer. Meth. Fluids 89: 519--532,
    2019). In the space-time EHDG method the velocity trace unknown is
    continuous while the pressure trace unknown is discontinuous
    across facets. In the space-time HDG method, all trace unknowns
    are discontinuous across facets. Alternatively, we present also a
    space-time embedded discontinuous Galerkin (EDG) method in which
    all trace unknowns are continuous across facets.  The advantage of
    continuous trace unknowns is that the formulation has fewer global
    degrees-of-freedom for a given mesh than when using discontinuous
    trace unknowns. Nevertheless, the discrete velocity field obtained
    by the space-time EHDG and EDG methods, like the space-time HDG
    method, is exactly divergence-free, even on moving
    domains. However, only the space-time EHDG and HDG methods result
    in divergence-conforming velocity fields. An immediate consequence
    of this is that the space-time EHDG and HDG discretizations of the
    conservative form of the Navier--Stokes equations are energy
    stable. The space-time EDG method, on the other hand, requires a
    skew-symmetric formulation of the momentum advection term to be
    energy-stable. Numerical examples will demonstrate the differences
    in solution obtained by the space-time EHDG, EDG, and HDG methods.
  \end{abstract}
%------------------------------------------------------------------------------
  \begin{keyword}
    Navier--Stokes \sep embedded \sep hybridized \sep discontinuous
    Galerkin \sep space-time \sep time-dependent domains.
  \end{keyword}
%------------------------------------------------------------------------------
\end{frontmatter}
%------------------------------------------------------------------------------
\section{Introduction}
\label{sec:introduction}

In this paper, we consider the solution of the Navier--Stokes
equations on moving and deforming domains. One of the more popular
approaches to solve partial differential equations on time-dependent
domains is the arbitrary Lagrangian-Eulerian (ALE) class of
methods. In the ALE method, the time-varying domain is mapped to a
fixed reference domain on which all computations are
performed. Although relatively easy to implement, it is known that the
ALE method does not automatically satisfy the Geometric Conservation
Law (GCL) \cite{Lesoinne:1996} which requires that the numerical
method must reproduce exactly a constant solution. Not satisfying the
GCL has consequences for the time-accuracy of the solution
\cite{Guillard:2000} and constraints on the numerical method are
necessary to enforce this property, e.g., \cite{Ivancic:2019,
  Persson:2009}.

A different approach to solving PDEs on deforming domains is by
space-time methods. In space-time finite element methods, the PDE is
discretized simultaneously in space and time by a finite element
method. Typically, discontinuous Galerkin (DG) time-stepping methods
are employed \cite{Eriksson:1985, Jamet:1978} tensorized with some
spatial elemental bases. The space-time finite element method has
successfully been applied to the Navier--Stokes equations, e.g.,
\cite{Johnson:1994, Masud:1997, Tezduyar:1992a, Ndri:2001,
  Ndri:2002}. However, using a continuous spatial basis results in
methods that are not locally conservative and which are generally not
suited for advection dominated flows.

Using a DG basis in both space and time results in the space-time DG
method. This approach was first introduced for compressible flows in
\cite{Vegt:2002} and applied later also to incompressible flows
\cite{Rhebergen:2013b, Vegt:2008}. The space-time DG method is locally
conservative, can be made of arbitrary order in both space and time,
automatically satisfies the GCL, and is well suited for advection
dominated flows. Unfortunately, the space-time DG method is
computationally costly; a $d$-dimensional time-dependent problem is
discretized as a $d+1$-dimensional space-time problem. This results in
a significant increase in the number of degrees-of-freedom compared
to, for example, a $d$-dimensional DG discretization within an ALE
framework.

To address the increase of degrees-of-freedom in a space-time
framework, space-time hybridizable discontinuous Galerkin (HDG)
methods were introduced in \cite{Rhebergen:2012, Rhebergen:2013a}; see
also \cite{Kirk:2019} for an analysis of the space-time HDG method for
the advection-diffusion equation. HDG methods were first introduced
for elliptic problems in \cite{Cockburn:2009a} to reduce the
computational cost of DG methods. This is achieved by introducing
additional facet unknowns in such a way that static-condensation, in
which cell-wise unknowns are eliminated, is trivial. Since its
introduction, the HDG method has successfully been applied to both
compressible \cite{Nguyen:2012} and incompressible
\cite{Cesmelioglu:2017, Fu:2019, Labeur:2012, Lehrenfeld:2016,
  Nguyen:2011, Qiu:2016, Rhebergen:2017, Rhebergen:2018,
  Schroeder:2018} flows.

In \cite{Horvath:2019} we introduced a new space-time HDG method for
the Navier--Stokes equations. The novelty of this new space-time HDG
method is that the discrete velocity is both exactly divergence-free
and divergence-conforming, even on time-dependent domains. A
consequence is that the discretization of the conservative form of the
Navier--Stokes equations is energy-stable. Additionally, the
space-time HDG method is locally mass and momentum conserving.

In the space-time HDG method \cite{Horvath:2019} the discrete velocity
and pressure trace unknowns are discontinuous across facets. As
previously mentioned, these additional facet unknowns are introduced
such that eliminating cell-wise unknowns is trivial. Recently, in
\cite{Rhebergen:2019, Cesmelioglu:2019}, an embedded-hybridized
discontinuous Galerkin (EHDG) method was introduced for incompressible
flows on a fixed domain. In such an approach the discrete velocity
trace is continuous across facets while the pressure trace is
discontinuous. It was shown in \cite{Rhebergen:2019} that the EHDG
method has fewer global degrees-of-freedom than the HDG method, but
retains the advantages of the HDG method, i.e., the discrete velocity
is exactly divergence-free and divergence-conforming.

To reduce the number of globally coupled degrees-of-freedom even
further, we may take both the velocity and pressure trace unknowns to
be continuous across facets. This results in a space-time embedded
discontinuous Galerkin (EDG) method. An EDG method for the
Navier--Stokes equations on fixed domains was introduced in
\cite{Labeur:2012}. Unfortunately, the EDG method results in a
discrete velocity that is not divergence-conforming. The EDG method,
therefore, requires a skew-symmetric formulation of the momentum
advection term to be energy-stable. Furthermore, the EDG method is not
locally mass conserving.

Motivated by the advantages the EHDG method has shown for
incompressible flows on a fixed mesh \cite{Rhebergen:2019}, in this
paper, we introduce a space-time EHDG method for the Navier--Stokes
equations on time-dependent domains. We furthermore generalize the EDG
method \cite{Labeur:2012} to a space-time formulation suitable for
approximating the solution to the Navier--Stokes equations on
moving/deforming domains.

The rest of this paper is organized as follows. In \cref{s:problem} we
introduce the Navier--Stokes problem. We present the space-time EHDG
and EDG methods in \cref{s:discretization} and discuss the properties
of these methods in \cref{s:properties}. We present numerical results
in \cref{s:examples} and draw conclusions in \cref{s:conclusions}.

%------------------------------------------------------------------------------
\section{The Navier--Stokes equations on moving/deforming domains}
\label{s:problem}
Let $\Omega(t) \subset \mathbb{R}^d$ ($d=2,3$) be a domain which
depends continuously on $t \in [0,T]$.  We are interested in the
solution of the Navier--Stokes equations on the space-time domain
$\mathcal{E} := \cbr{(t,x) \,|\, 0<t<T, \, x\in\Omega(t)} \subset
\mathbb{R}^{d+1}$:
\begin{subequations}
  \begin{align}
    \label{eq:navierstokes_a}
    \partial_tu + u\cdot\nabla u - \nu \nabla^2 u + \nabla p
    &= f && \text{in } \mathcal{E}, \\
    \label{eq:navierstokes_b}
    \nabla \cdot u
    &= 0 && \text{in } \mathcal{E}, \\
    \label{eq:navierstokes_c}
    u
    &= 0 && \text{on } \partial\mathcal{E}^D, \\
    \label{eq:navierstokes_d}
    \sbr[0]{n_t + u\cdot n - \max(n_t + u\cdot n, 0)}u + (p \mathbb{I} - \nu \nabla u) n
    &= g && \text{on } \partial\mathcal{E}^N, \\
    \label{eq:navierstokes_e}
    u(0,x)
    &= u_0(x) && \text{in } \Omega(0),
  \end{align}
  \label{eq:navierstokes}
\end{subequations}
where $u : \mathcal{E} \to \mathbb{R}^d$ and
$p : \mathcal{E} \to \mathbb{R}$ are the unknown velocity and
kinematic pressure, respectively, $\nu \in \mathbb{R}^+$ is the
kinematic viscosity, $f : \mathcal{E} \to \mathbb{R}^d$ is a forcing
term, $g : \partial\mathcal{E}^N \to \mathbb{R}^d$ is boundary data,
$u_0 : \Omega(0) \to \mathbb{R}^d$ the initial divergence-free
velocity field, and $\mathbb{I} \in \mathbb{R}^{d\times d}$ is the
identity matrix. Furthermore, the boundary of $\mathcal{E}$ is
partitioned such that
$\partial\mathcal{E} = \partial\mathcal{E}^D \cup
\partial\mathcal{E}^N \cup \Omega(0) \cup \Omega(T)$, where there is
no overlap between any two of the four sets. The space-time outward
unit normal to $\partial\mathcal{E}$ is denoted by
$(n_t, n) \in \mathbb{R}^{d+1}$, with temporal component
$n_t \in \mathbb{R}$ and spatial component $n \in \mathbb{R}^d$.

%------------------------------------------------------------------------------
\section{Discretization}
\label{s:discretization}
%
%------------------------------------------------------------------------------
\subsection{Notation}
\label{ss:notation}
We consider a `slab-by-slab' approach to discretize the space-time
domain $\mathcal{E}$. For this we partition first the time interval
using time levels $0 = t^0 < t^1 < \cdots < t^N = T$. The length of
each time interval $I^n = (t^n, t^{n+1})$ is denoted by
$\Delta t^n = t^{n+1} - t^n$. We then define a space-time slab as
$\mathcal{E}^n = \cbr[0]{(t, x) \in \mathcal{E} \, | \, t \in
  I^n}$. At time $t=t_n$ we denote $\Omega(t)$ by $\Omega^n$. The
boundary of a space-time slab $\mathcal{E}^n$ is then given by
$\Omega^n$, $\Omega^{n+1}$ and
$\partial\mathcal{E}^n := \cbr[0]{ (t, x) \in \partial\mathcal{E} \, |
  \, t \in I^n}$.

We next introduce the triangulation
$\mathcal{T}^n := \cbr{\mathcal{K}}$ of the space-time slab
$\mathcal{E}^n$, which consists of non-overlapping $(d+1)$-dimensional
simplicial space-time elements $\mathcal{K}$. For simplicity we
consider the case of matching meshes at the boundary, $\Omega^n$,
between two space-time slabs $\mathcal{E}^{n-1}$ and
$\mathcal{E}^n$. For non-conforming (hexahedral) space-time meshes,
see for example \cite{Jamet:1978, Ven:2008}. The triangulation of the
space-time domain $\mathcal{E}$ is then denoted by
$\mathcal{T} := \cup_n\mathcal{T}^n$.

On the boundary of a space-time element, $\partial \mathcal{K}$, we
denote the outward unit space-time normal vector by
$\del[0]{n_t^{\mathcal{K}}, n^{\mathcal{K}}} \in \mathbb{R}^{d+1}$,
however, if no confusion arises, we drop the superscript notation. The
boundary of each space-time element $\mathcal{K} \in \mathcal{T}^n$
consists of at most one facet on which $|n_t| = 1$. We denote this
facet by $K^n$ if $n_t = -1$, and by $K^{n+1}$ if $n_t = 1$. The
remaining part of the boundary of $\mathcal{K}$ is denoted by
$\mathcal{Q}^n_{\mathcal{K}} = \partial \mathcal{K} \backslash K^n$ or
$\mathcal{Q}^n_{\mathcal{K}} = \partial \mathcal{K} \backslash
K^{n+1}$.

In a space-time slab $\mathcal{E}^n$, the set of all facets for which
$|n_t| \neq 1$ is denoted by $\mathcal{S}^n$, the union of these
facets is denoted by $\Gamma^n$.  The set of all interior facets is
denoted by $\mathcal{S}_I^n$, while the set of facets that lie on the
boundary of $\mathcal{E}^n$ for which $|n_t| \ne 1$, is denoted by
$\mathcal{S}_B^n$. The set of facets that lie on the Neumann boundary
$\partial\mathcal{E}^N \cap \partial\mathcal{E}^n$ is denoted by
$\mathcal{S}_N^n$.

In the space-time slab $\mathcal{E}^n$ we consider spaces of
discontinuous functions on $\mathcal{T}^n$,
\begin{subequations}
  \label{eq:st_general_func_spc}
  \begin{align}
  \label{eq:st_general_func_spc_a}
    V_h^n &:= \cbr[1]{ v_h \in \sbr[0]{L^2(\mathcal{T})}^d \, | \,
      v_h \in \sbr[0]{P_k(\mathcal{K})}^d\ \forall \mathcal{K} \in \mathcal{T}^n},
    \\
    \label{eq:st_general_func_spc_b}
    Q_h^n &:= \cbr[1]{ q_h \in L^2(\mathcal{T}) \, | \, q_h \in P_{k-1}(\mathcal{K}) \
      \forall \mathcal{K} \in \mathcal{T}^n},
  \end{align}
\end{subequations}
where $P_l(D)$ denotes the space of polynomials of degree $l \ge 0$ on
a domain $D$. We consider also the following finite dimensional
function spaces on $\Gamma^n$,
\begin{subequations}
  \label{eq:st_general_func_spc_facets_hdg}
  \begin{align}
  \label{eq:st_general_func_spc_facets_hdg_a}
    \bar{V}^n_h
    &:= \cbr[1]{ \bar{v}_h \in \sbr[0]{L^2(\mathcal{S})}^d \, | \,
      \bar{v}_h \in \sbr[0]{P_k(S)}^d
      \ \forall S \in \mathcal{S}^n,\ \bar{v}_h = 0 \ \text{on}\ \partial \mathcal{E}^D \cap \partial \mathcal{E}^n},
    \\
    \label{eq:st_general_func_spc_facets_hdg_d}
    \bar{Q}^n_h
    &:= \cbr[1]{ \bar{q}_h \in L^2(\mathcal{S}) \, | \,
      \bar{q}_h \in P_k(S) \ \forall S\in\mathcal{S}^n}.
  \end{align}
\end{subequations}
The space-time HDG, EHDG and EDG methods are characterized by the
choice of finite element function spaces:
\begin{subequations}
  \label{eq:FESpaces}
\begin{align}
  \label{eq:FESpaces_HDG}
  \text{ST-HDG method: } && X_h^{v,n} &:= V_h^n \times \bar{V}_h^n, & X_h^{q,n} &:= Q_h^n \times \bar{Q}_h^n,
  \\
  \label{eq:FESpaces_EHDG}
  \text{ST-EHDG method: } && X_h^{v,n} &:= V_h^n \times (\bar{V}_h^n \cap C(\Gamma^n)), & X_h^{q,n} &:= Q_h^n \times \bar{Q}_h^n,
  \\
  \label{eq:FESpaces_EDG}
  \text{ST-EDG method: } && X_h^{v,n} &:= V_h^n \times (\bar{V}_h^n \cap C(\Gamma^n)), & X_h^{q,n} &:= Q_h^n \times (\bar{Q}_h^n\cap C(\Gamma^n)).
\end{align}
\end{subequations}
Note that the space-time HDG method uses facet spaces that are
discontinuous. The space-time EHDG method uses a continuous facet
velocity space and a discontinuous pressure facet space. The facet
spaces in the space-time EDG method are both continuous. For
notational convenience, we denote function pairs in $X_h^{v,n}$ and
$X_h^{q,n}$ by boldface, e.g.,
$\boldsymbol{v}_h = (v_h, \bar{v}_h) \in X_h^{v,n}$ and
$\boldsymbol{q}_h = (q_h, \bar{q}_h) \in X_h^{q,n}$. Furthermore, we
introduce $X_h^n = X_h^{v,n} \times X_h^{q,n}$.

Let $q_h^+$ and $q_h^-$ denote the traces of a function
$q_h \in Q_h^n$ at an interior facet shared by elements
$\mathcal{K}^+$ and $\mathcal{K}^-$. We introduce the standard `jump'
operator defined as $\jump{q_hn} = q_h^+n^+ + q_h^-n^-$. On a boundary
facet the jump operator is simply defined as $\jump{q_hn} =
q_hn$. Similar expressions hold for vector-valued functions in
$V_h^n$.

%------------------------------------------------------------------------------
  \subsection{The space-time HDG, EHDG and EDG discontinuous Galerkin methods}
\label{ss:h-meh-dg}
In each space-time slab $\mathcal{E}^n$, $n=0,1,\cdots, N-1$, the
discretization of the Navier--Stokes equations on a time-dependent
domain \cref{eq:navierstokes} is given by: find
$(\boldsymbol{u}_h, \boldsymbol{p}_h) \in X^n_h$ such
that
\begin{subequations}
  \begin{align}
    t_h^n(\boldsymbol{u}_h; \boldsymbol{u}_h, \boldsymbol{v}_h) + 
    a_h^n(\boldsymbol{u}_h, \boldsymbol{v}_h)
    + b_h^n(\boldsymbol{p}_h, \boldsymbol{v}_h)
    &= L_h^n(\boldsymbol{v}_h) && \forall \boldsymbol{v}_h \in X_h^{v,n},
    \\
    b_h^n(\boldsymbol{q}_h, \boldsymbol{u}_h) &= 0 && \forall \boldsymbol{q}_h \in X_h^{q,n},
  \end{align}
  \label{eq:discretization}
\end{subequations}
with
\begin{subequations}
  \begin{equation}
    \label{eq:linform_L}
    L_h^n(\boldsymbol{v}_h)
    := 
    \sum_{\mathcal{K}\in\mathcal{T}^n} \int_{\mathcal{K}} f \cdot v_h \dif x\dif t
    - \int_{\partial\mathcal{E}^N\cap I^n} g \cdot \bar{v}_h \dif s 
    + \int_{\Omega^n}u_h^- \cdot v_h \dif x,
  \end{equation}
  where $u_h^- = \lim_{\varepsilon\to 0}u_h(t^n-\varepsilon)$ for
  $n>0$. For the space-time HDG and EHDG methods, when $n=0$, $u_h^-$
  is the projection of the initial condition $u_0$ into
  $V_h^0 \cap H(\text{div})$. For the space-time EDG method, $u_h^-$
  is the projection of the initial condition $u_0$ into $V_h^0$. In
  all cases, the projection is such that $u_h^-$ is exactly
  divergence-free.

  The `space-time Stokes' bilinear forms are given by \cite[Section
  3.2]{Horvath:2019}:
  \begin{align}
    \label{eq:bilinform_a}
    a_h^n(\boldsymbol{u}, \boldsymbol{v})
    :=&
        \sum_{\mathcal{K}\in\mathcal{T}^n} \int_{\mathcal{K}} \nu \nabla u : \nabla v \dif x\dif t
        + \sum_{K\in\mathcal{T}^n} \int_{\mathcal{Q}_{\mathcal{K}}} 
        \frac{\nu \alpha}{h_{\mathcal{K}}}(u - \bar{u}) \cdot (v - \bar{v}) \dif s
    \\
    \nonumber
       & - \sum_{\mathcal{K}\in\mathcal{T}^n} \int_{\mathcal{Q}_{\mathcal{K}}} \nu \sbr[1]{ (u - \bar{u}) \cdot \pd{v}{n} 
         + \pd{u}{n} \cdot (v - \bar{v}) } \dif s,
    \\
    \label{eq:bilinform_b}
    b_h^n(\boldsymbol{p}, \boldsymbol{v})
    := &
         - \sum_{\mathcal{K}\in\mathcal{T}^n} \int_{\mathcal{K}} p \nabla\cdot v \dif x\dif t 
         + \sum_{\mathcal{K}\in\mathcal{T}^n} \int_{\mathcal{Q}_{\mathcal{K}}} (v - \bar{v}) \cdot n \bar{p} \dif s,
  \end{align}
  with $\alpha > 0$ a penalty parameter that needs to be sufficiently
  large to ensure stability. Finally, the space-time convective
  trilinear form is given by
  \begin{equation}
    \label{eq:trilinform_o}
    \begin{split}
      t^n_h(\boldsymbol{w}; & \boldsymbol{u}, \boldsymbol{v})
      \\      
      :=
      & \sum_{\mathcal{K}\in\mathcal{T}^n} \int_{\mathcal{Q}^n_{\mathcal{K}}}
      \del{n_t + w\cdot n}\del{u + \lambda\del{\bar{u} - u}} \cdot (v - \bar{v}) \dif s
      - \sum_{\mathcal{K}\in\mathcal{T}^n} \int_{\mathcal{Q}^n_{\mathcal{K}}} \tfrac{1}{2} (w \cdot n) \del{ u \cdot v - \bar{u} \cdot \bar{v} } \dif s
      \\
      &- \sum_{\mathcal{K}\in\mathcal{T}^n} \int_{\mathcal{K}} \del{u\partial_tv + u\otimes w : \nabla v} \dif x\dif t
      +\sum_{\mathcal{K}\in\mathcal{T}^n} \int_{\mathcal{K}} \tfrac{1}{2} \del{u\otimes w : \nabla v + v\otimes w : \nabla u} \dif x\dif t      
      \\
      &+ \int_{\partial\mathcal{E}^N \cap I_n}\max\del{n_t + \bar{w}\cdot n, 0}\bar{u}\cdot \bar{v} \dif s      
      - \int_{\partial\mathcal{E}^N \cap I_n} \tfrac{1}{2} \del{\bar{w}\cdot n}\bar{u}\cdot \bar{v} \dif s
      + \sum_{\mathcal{K}\in\mathcal{T}^n} \int_{K^{n+1}}u \cdot v \dif x.
    \end{split}
  \end{equation}
\end{subequations}
The space-time convective trilinear form \cref{eq:trilinform_o} is an
extension of the convective trilinear form we proposed in
\cite{Horvath:2019} for space-time HDG methods. Indeed, it can be
shown that if $w$ in \cref{eq:trilinform_o} is divergence-conforming
and point-wise divergence-free, the convective trilinear form reduces
to:
\begin{multline}
  \label{eq:trilinform_hr}
  t_h^n(\boldsymbol{w}; \boldsymbol{u}, \boldsymbol{v}) := 
  \sum_{\mathcal{K}\in\mathcal{T}^n} \int_{K^{n+1}}u \cdot v \dif x
  + \sum_{\mathcal{K}\in\mathcal{T}^n} \int_{\mathcal{Q}^n_{\mathcal{K}}}
  \del{n_t + w\cdot n}\del{u + \lambda\del{\bar{u} - u}} \cdot (v - \bar{v}) \dif s
  \\
  + \int_{\partial\mathcal{E}^N \cap I_n}\max\del{n_t + \bar{w}\cdot n, 0}\bar{u}\cdot \bar{v} \dif s
  - \sum_{\mathcal{K}\in\mathcal{T}^n} \int_{\mathcal{K}} \del{u\partial_tv + u\otimes w : \nabla v} \dif x\dif t,
\end{multline}
which is the discretization of the momentum advection term in
conservative form. Provided that $u_h$ is divergence-conforming and
point-wise divergence-free, we showed \cite[Section 4]{Horvath:2019}
that the space-time discretization \cref{eq:discretization}, with the
trilinear form given by \cref{eq:trilinform_hr}, is energy-stable.

The difference between \cref{eq:trilinform_o} and
\cref{eq:trilinform_hr}, for a $w$ in \cref{eq:trilinform_o} that is
not both divergence-conforming and point-wise divergence-free, is the
following consistent `energy-stabilization' term:
\begin{multline}
  \label{eq:energy-stab}
  e_h^n(\boldsymbol{w}; \boldsymbol{u}, \boldsymbol{v}) :=
  \sum_{\mathcal{K}\in\mathcal{T}^n} \int_{\mathcal{K}} \tfrac{1}{2} \del{u\otimes w : \nabla v + v\otimes w : \nabla u} \dif x\dif t
  - \sum_{\mathcal{K}\in\mathcal{T}^n} \int_{\mathcal{Q}^n_{\mathcal{K}}} \tfrac{1}{2} (w \cdot n) \del{ u \cdot v - \bar{u} \cdot \bar{v} } \dif s
  \\
  - \int_{\partial\mathcal{E}^N \cap I_n} \tfrac{1}{2} \del{\bar{w}\cdot n}\bar{u}\cdot \bar{v} \dif s.
\end{multline}
This term results in a skew-symmetric discretization of the momentum
advection term. This is necessary to prove energy-stability of
\cref{eq:discretization} in the event that $u_h \in V_h$ is not both
divergence-conforming and point-wise divergence-free, as we will show
in \cref{s:properties}.

%------------------------------------------------------------------------------
\subsection{The Oseen problem}
\label{ss:picard}

The space-time discretizations of the Navier--Stokes equations
\cref{eq:discretization} results in a system of nonlinear algebraic
equations in each space-time slab. We use Picard iteration to solve
these systems of equations; given
$(\boldsymbol{u}_h^k, \boldsymbol{p}_h^k) \in X_h^n$ we seek
$(\boldsymbol{u}_h^{k+1}, \boldsymbol{p}_h^{k+1}) \in X_h^n$ such that
\begin{subequations}
  \begin{align}
    t_h^n(\boldsymbol{u}_h^k; \boldsymbol{u}_h^{k+1}, \boldsymbol{v}_h) + 
    a_h^n(\boldsymbol{u}_h^{k+1}, \boldsymbol{v}_h)
    + b_h^n(\boldsymbol{p}_h^{k+1}, \boldsymbol{v}_h)
    &= L_h^n(\boldsymbol{v}_h) && \forall \boldsymbol{v}_h \in X_h^{v,n},
    \\
    b_h^n(\boldsymbol{q}_h, \boldsymbol{u}_h^{k+1}) &= 0 && \forall \boldsymbol{q}_h \in X_h^{q,n},
  \end{align}
  \label{eq:Picard}
\end{subequations}
for $k=0,1,2,\hdots$. Once a convergence criterion has been met we set
$(\boldsymbol{u}_h, \boldsymbol{p}_h) = (\boldsymbol{u}_h^{k+1},
\boldsymbol{p}_h^{k+1})$. We note that the linearized discretization
\cref{eq:Picard} is a space-time discretization of the Oseen
equations.

%------------------------------------------------------------------------------
\section{Properties of the space-time HDG, EHDG and EDG discretizations}
\label{s:properties}

In this section we discuss properties of the space-time discretizations
of the Navier--Stokes equations \cref{eq:discretization}. We start by
showing that only the space-time HDG and EHDG discretizations are
locally mass conserving.

\begin{proposition}[Local mass conservation]
  \label{prop:local_mass_conservation}
  The space-time HDG and EHDG methods defined in
  \cref{eq:discretization} are locally mass conserving.
\end{proposition}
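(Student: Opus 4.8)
The plan is to test the discrete mass-conservation equation $b_h^n(\boldsymbol{q}_h, \boldsymbol{u}_h) = 0$ against carefully chosen test functions $\boldsymbol{q}_h = (q_h, \bar{q}_h)$ and extract from it a local (element-wise) balance law. Recall from \cref{eq:bilinform_b} that
\begin{equation*}
  b_h^n(\boldsymbol{q}_h, \boldsymbol{u}_h)
  = - \sum_{\mathcal{K}\in\mathcal{T}^n} \int_{\mathcal{K}} q_h \nabla\cdot u_h \dif x\dif t
    + \sum_{\mathcal{K}\in\mathcal{T}^n} \int_{\mathcal{Q}_{\mathcal{K}}} (u_h - \bar{u}_h) \cdot n\, \bar{q}_h \dif s.
\end{equation*}
First I would fix a single space-time element $\mathcal{K}_0 \in \mathcal{T}^n$ and take $q_h$ to be the indicator-type function equal to $1$ on $\mathcal{K}_0$ (which lies in $Q_h^n$ since it is piecewise polynomial of degree $k-1\ge 0$) and $\bar{q}_h = 0$. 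This immediately gives $\int_{\mathcal{K}_0}\nabla\cdot u_h \dif x\dif t = 0$, i.e., the exact space-time divergence-free property element by element. Then, using the space-time divergence theorem on $\mathcal{K}_0$, $\int_{\partial\mathcal{K}_0}(n_t + u_h\cdot n)\dif s = \int_{\mathcal{K}_0}(\partial_t 1 + \nabla\cdot u_h)\dif x\dif t = 0$, which is the elemental space-time mass balance (the $n_t$ pieces on $K^n$ and $K^{n+1}$ account for the change of the spatial measure of the element in time).

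Next I would recover the facet version of mass conservation, which is the part that distinguishes HDG/EHDG from EDG. Take $q_h = 0$ and let $\bar{q}_h$ be supported on a single interior facet $S \in \mathcal{S}_I^n$, equal there to an arbitrary polynomial in $P_k(S)$; this is a legal choice in $\bar{Q}_h^n$ precisely because the pressure trace space is \emph{discontinuous} across facets in both the HDG \cref{eq:FESpaces_HDG} and EHDG \cref{eq:FESpaces_EHDG} methods. Summing the facet contributions from the two elements $\mathcal{K}^+, \mathcal{K}^-$ sharing $S$, the equation $b_h^n(\boldsymbol{q}_h,\boldsymbol{u}_h)=0$ reduces to $\int_S \bar{q}_h\, \jump{(u_h - \bar{u}_h)\cdot n}\dif s = 0$ for all $\bar{q}_h \in P_k(S)$, hence $\jump{(u_h - \bar{u}_h)\cdot n} = (u_h^+ - \bar u_h)\cdot n^+ + (u_h^- - \bar u_h)\cdot n^- = 0$ on $S$, i.e., $u_h^+\cdot n^+ + u_h^-\cdot n^- = 0$ (the single-valued trace $\bar u_h$ cancels). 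A similar argument on Neumann facets $S\in\mathcal{S}_N^n$ shows $u_h\cdot n = \bar u_h\cdot n$ there. Combining this normal-continuity-of-flux statement with the elemental balance above gives the standard finite-volume-type local conservation statement: for each $\mathcal{K}_0$, the space-time flux $\int_{\partial\mathcal{K}_0}(n_t + u_h\cdot n)\dif s$ vanishes, and the flux across any shared facet is equal and opposite as seen from the two neighbouring elements.

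The main obstacle, and the only genuinely load-bearing point, is the step where I choose $\bar q_h$ supported on a single facet with an arbitrary polynomial trace there: this is exactly what fails for the space-time EDG method, since in \cref{eq:FESpaces_EDG} the pressure trace space is $\bar Q_h^n \cap C(\Gamma^n)$ and one cannot localize the test function to one facet without forcing it to vanish at the facet boundaries — so only the weaker, globally-coupled statement survives, and EDG is not locally mass conserving. I should therefore state clearly in the proof that the discontinuity of $\bar q_h$ in the HDG and EHDG spaces is what makes the facet-by-facet argument go through, and remark that the continuity of the \emph{velocity} trace in EHDG plays no role here (the $\bar u_h$ terms cancel in the jump regardless). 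Aside from that, all steps are routine applications of the divergence theorem and the fundamental lemma of the calculus of variations on each facet and element.
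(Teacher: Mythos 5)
Your proof is correct and follows essentially the same route as the paper: test the discrete mass equation $b_h^n(\boldsymbol{q}_h,\boldsymbol{u}_h)=0$ with element-supported $q_h$ and facet-supported $\bar{q}_h$, the latter being admissible precisely because the facet pressure space is discontinuous in both the HDG and EHDG methods (and this is indeed the step that fails for EDG). The only difference is that the paper chooses $q_h=\nabla\cdot u_h$ and $\bar{q}_h=\jump{(u_h-\bar{u}_h)\cdot n}$ directly, obtaining the \emph{pointwise} divergence-free and divergence-conforming properties rather than just the element-averaged divergence you derive from the indicator test function; your weaker averaged statement still suffices for the local mass balance, so this is a stylistic rather than substantive difference.
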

\begin{proof}
  The proof is similar to \cite[Prop. 1]{Rhebergen:2018}. We note
  first that setting $\boldsymbol{v}_h = 0$, $\bar{q}_h = 0$ and
  $q_h = \nabla \cdot u_h$ in \cref{eq:discretization} immediately
  results in $\nabla \cdot u_h = 0$ for all $(t,x) \in \mathcal{K}$,
  $\forall \mathcal{K} \in \mathcal{T}^n$, i.e., the approximate
  velocity is exactly divergence-free.

  We furthermore note that setting $(v_h, \bar{v}_h, q_h) = (0, 0, 0)$
  and
  $\bar{q}_h = \jump{\del[0]{u_h^k - \bar{u}_h^k}\cdot n} \in
  \bar{Q}_h^n$ in \cref{eq:discretization} results in
  $\jump{u_h \cdot n} = 0$ for all
  $(t,x) \in \mathcal{S}, \forall \mathcal{S} \in \mathcal{S}_I^n$ and
  $u_h \cdot n = \bar{u}_h \cdot n$ for all
  $(t,x) \in \mathcal{S}, \forall \mathcal{S} \in \mathcal{S}_B^n$,
  i.e., the approximate velocity is divergence-conforming.

  Since the approximate velocity is both divergence-free and
  divergence-conforming, the result follows. \qed
\end{proof}

\begin{remark}
  \label{rem:edg-div-free}
  The approximate velocity obtained by the space-time EDG method is
  divergence-free but not divergence-conforming. This is because
  $\jump{\del[0]{u_h^k - \bar{u}_h^k}\cdot n} \notin \bar{Q}_h^n \cap
  C(\Gamma^n)$. As a result, the space-time EDG method is not locally
  mass conserving.
\end{remark}

We next show energy-stability of all three space-time methods.

\begin{proposition}[energy-stability]
  \label{prop:sthdg_edghdg_energy}
  The space-time methods defined in \cref{eq:discretization} are
  energy-stable.
\end{proposition}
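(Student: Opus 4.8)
The plan is to test the discrete system \cref{eq:discretization} with the discrete solution itself and extract a nonnegative energy identity, handling the three methods in parallel and using the consistent energy-stabilization term \cref{eq:energy-stab} to repair the EDG case. Concretely, I would set $\boldsymbol{v}_h = \boldsymbol{u}_h$ in the momentum equation and $\boldsymbol{q}_h = \boldsymbol{p}_h$ in the continuity equation, then subtract. The pressure contributions cancel: $b_h^n(\boldsymbol{p}_h,\boldsymbol{u}_h) - b_h^n(\boldsymbol{p}_h,\boldsymbol{u}_h) = 0$. This leaves $t_h^n(\boldsymbol{u}_h;\boldsymbol{u}_h,\boldsymbol{u}_h) + a_h^n(\boldsymbol{u}_h,\boldsymbol{u}_h) = L_h^n(\boldsymbol{u}_h)$, and the task reduces to bounding the two bilinear/trilinear forms from below.

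For the viscous form $a_h^n(\boldsymbol{u}_h,\boldsymbol{u}_h)$, I would invoke the standard interior-penalty coercivity estimate: the volume term $\int_{\mathcal K}\nu|\nabla u_h|^2$ and the penalty term $\int_{\mathcal Q_{\mathcal K}}\tfrac{\nu\alpha}{h_{\mathcal K}}|u_h-\bar u_h|^2$ dominate the consistency terms once $\alpha$ is chosen sufficiently large relative to the constant in the discrete trace/inverse inequality, so that $a_h^n(\boldsymbol{u}_h,\boldsymbol{u}_h) \ge C\,\tnorm{\boldsymbol{u}_h}^2 \ge 0$ in the natural DG energy norm. This is classical and I would simply cite the corresponding estimate (it mirrors the analysis in \cite{Horvath:2019}).

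The substantive part is the convective trilinear form evaluated on the diagonal, $t_h^n(\boldsymbol{u}_h;\boldsymbol{u}_h,\boldsymbol{u}_h)$. For the space-time HDG and EHDG methods, \cref{prop:local_mass_conservation} guarantees $u_h$ is divergence-conforming and pointwise divergence-free, so \cref{eq:trilinform_o} collapses to \cref{eq:trilinform_hr}, and I would reuse the computation from \cite[Section 4]{Horvath:2019}: integrating the term $\int_{\mathcal K} u_h\partial_t u_h + u_h\otimes u_h:\nabla u_h$ by parts in space-time and combining with the $K^{n+1}$ and $\mathcal Q^n_{\mathcal K}$ facet terms produces telescoping jump contributions plus the "upwind"-type facet term $\int_{\mathcal Q^n_{\mathcal K}}(n_t+u_h\cdot n)(u_h+\lambda(\bar u_h-u_h))\cdot(u_h-\bar u_h)$ and the Neumann $\max(\cdot,0)$ term, each of which is nonnegative (the upwind term because, on a divergence-free field, $\int_{\partial\mathcal K}(n_t+u_h\cdot n)$-type integrals give the right sign; the $\max$ term manifestly so). The outcome is $\int_{\Omega^{n+1}}\tfrac12|u_h^-|^2 + (\text{nonnegative facet terms}) - \int_{\Omega^n}\tfrac12|u_h^-|^2 \le \int_{\Omega^n}(u_h^-|_{\text{old}})\cdot u_h^-$, and a Cauchy--Schwarz/Young step on $L_h^n$ closes the slab-to-slab energy bound, which one then sums over $n$.

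For the space-time EDG method, $u_h$ is divergence-free but not divergence-conforming (\cref{rem:edg-div-free}), so \cref{eq:trilinform_o} does not reduce to \cref{eq:trilinform_hr}; the difference is exactly $e_h^n(\boldsymbol{w};\boldsymbol{u},\boldsymbol{v})$ of \cref{eq:energy-stab}. The point is that $e_h^n$ skew-symmetrizes the advection operator: one checks by inspection that $e_h^n(\boldsymbol{u}_h;\boldsymbol{u}_h,\boldsymbol{u}_h)$ together with the "unreduced" volume term $-\int_{\mathcal K} u_h\otimes u_h:\nabla u_h$ and the facet term $-\int_{\mathcal Q^n_{\mathcal K}}\tfrac12(u_h\cdot n)(u_h\cdot u_h - \bar u_h\cdot\bar u_h)$ reorganizes so that the non-divergence-conforming discrepancy cancels, leaving the same sign-definite structure as in the HDG/EHDG case — this is precisely why the skew-symmetric form was introduced. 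I would carry out this cancellation explicitly (it is the one genuinely new verification), then conclude as before. The main obstacle I anticipate is exactly this bookkeeping: tracking the space-time normal $n_t$, the telescoping of the $K^n$/$K^{n+1}$ traces across slabs, and confirming that every leftover facet/volume term from the skew-symmetrization has the correct sign without the divergence-conforming property to lean on.
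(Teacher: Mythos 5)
Your proposal follows essentially the same route as the paper: test with the discrete solution so the pressure terms drop out via the continuity equation, use coercivity of $a_h^n$ for sufficiently large $\alpha$, reduce to the conservative trilinear form and cite \cite{Horvath:2019} for the HDG/EHDG cases, and for EDG verify directly that the skew-symmetrized trilinear form is sign-definite on the diagonal (the paper's computation hinges on the identity $u_h\cdot(u_h-\bar u_h)-\tfrac12(|u_h|^2-|\bar u_h|^2)=\tfrac12|u_h-\bar u_h|^2$, the upwind choice of $\lambda$ turning $(\tfrac12-\lambda)(n_t+w_h\cdot n)$ into $\tfrac12|n_t+w_h\cdot n|$, and single-valuedness of $\bar u_h$ on interior facets). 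The only loose point is a sign slip in your sketched energy identity — both temporal traces $\int_{K^{n+1}}\tfrac12|u_h|^2$ and $\int_{K^n}\tfrac12|u_h|^2$ enter with a plus sign before the polarization step — but this does not affect the validity of the approach.
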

\begin{proof}
  The space-time HDG and EHDG methods both result in an approximate
  velocity that is divergence-free and divergence-conforming, see
  \Cref{prop:local_mass_conservation}. As a result, the trilinear form
  $t_h^n(\cdot; \cdot, \cdot)$ in \cref{eq:discretization} is
  identical to the trilinear form given in
  \cref{eq:trilinform_hr}. The proof of energy-stability now follows
  identical steps as in \cite[Section 4]{Horvath:2019} and is
  therefore omitted.

  We prove now energy-stability of the space-time EDG method. Consider
  the first space-time slab, and assume that we have obtained
  $(\boldsymbol{u}_h^k, \boldsymbol{p}_h^k)$ from the $k$th Picard
  iteration~\cref{eq:Picard}. To simplify notation, we write
  $\boldsymbol{w}_h = \boldsymbol{u}_h^k$ and
  $\boldsymbol{u}_h = \boldsymbol{u}_h^{k+1}$. For homogeneous
  boundary conditions and setting $f=0$ and
  $(\boldsymbol{v}_h, \boldsymbol{q}_h) = (\boldsymbol{u}_h,
  \boldsymbol{p}_h^{k+1})$ in \cref{eq:Picard},
  \begin{equation}
    \label{eq:alternative_wform}
    t_h^0(\boldsymbol{w}_h, \boldsymbol{u}_h, \boldsymbol{u}_h) + 
    a_h^0(\boldsymbol{u}_h, \boldsymbol{u}_h) =
    \int_{\Omega_0}u_h^- \cdot u_h \dif s.
  \end{equation}
  For $\alpha > 0$ large enough we know that
  $a_h^0(\boldsymbol{u}_h, \boldsymbol{u}_h) \ge 0$
  \cite{Rhebergen:2017, Wells:2011}. This implies that for
  sufficiently large $\alpha > 0$
  \begin{equation}
    \label{eq:alternative_wform_pm}
    t_h^0(\boldsymbol{w}_h, \boldsymbol{u}_h, \boldsymbol{u}_h)
    - \int_{\Omega_0}u_h^- \cdot u_h \dif s
    \le 0.
  \end{equation}
  Using that
  $u_h \cdot (u_h - \bar{u}_h) - \tfrac{1}{2} (\envert{u_h}^2 -
  \envert{\bar{u}_h}^2) = \tfrac{1}{2} \envert{u_h - \bar{u}_h}^2$ the
  trilinear form may be written as
  \begin{equation}
    \label{eq:rearrange2}
    \begin{split}
      t_h^0(\boldsymbol{w}_h, \boldsymbol{u}_h, \boldsymbol{u}_h)
      &:=
      \sum_{\mathcal{K}\in\mathcal{T}^0} \int_{K^{1}}\envert{u_h}^2 \dif s
      + \sum_{\mathcal{K}\in\mathcal{T}^0} \int_{\mathcal{Q}^0_{\mathcal{K}}} 
      n_t u_h \cdot (u_h - \bar{u}_h) \dif s
      \\
      &- \sum_{\mathcal{K}\in\mathcal{T}^0} \int_{\mathcal{Q}^0_{\mathcal{K}}} 
      \lambda \del{n_t + w_h\cdot n}\envert{u_h - \bar{u}_h}^2 \dif s
      + \sum_{\mathcal{K}\in\mathcal{T}^0} \int_{\mathcal{Q}^0_{\mathcal{K}}} 
      \tfrac{1}{2}(w_h \cdot n) \envert{u_h - \bar{u}_h}^2 \dif s
      \\
      &+ \int_{\partial\mathcal{E}^N \cap I_0}\max(n_t + \bar{w}_h\cdot n, 0)\envert{\bar{u}_h}^2 \dif s
      - \int_{\partial\mathcal{E}^N \cap I_0} \tfrac{1}{2} \del{\bar{w}_h\cdot n}\envert{\bar{u}_h}^2 \dif s
      \\
      &- \sum_{\mathcal{K}\in\mathcal{T}^0} \int_{\mathcal{K}} \tfrac{1}{2}\partial_t\envert{u_h}^2 \dif x\dif t.      
    \end{split}
  \end{equation}
  Adding and subtracting
  $\int_{\mathcal{Q}^0_{\mathcal{K}}}\tfrac{1}{2} n_t \envert{u_h -
    \bar{u}_h}^2 \dif s$ and
  $\int_{\partial\mathcal{E}^N \cap I_0} \tfrac{1}{2}
  n_t\envert{\bar{u}_h}^2 \dif s$ results in
  \begin{equation}
    \label{eq:rearrange3}
    \begin{split}
      t_h^0(\boldsymbol{w}_h, \boldsymbol{u}_h, \boldsymbol{u}_h)
      &:=
      \sum_{\mathcal{K}\in\mathcal{T}^0} \int_{K^{1}}\envert{u_h}^2 \dif s
      + \sum_{\mathcal{K}\in\mathcal{T}^0} \int_{\mathcal{Q}^0_{\mathcal{K}}} 
      n_t u_h \cdot (u_h - \bar{u}_h) \dif s
      \\
      &- \sum_{\mathcal{K}\in\mathcal{T}^0} \int_{\mathcal{Q}^0_{\mathcal{K}}} 
      \lambda \del{n_t + w_h\cdot n}\envert{u_h - \bar{u}_h}^2 \dif s
      + \sum_{\mathcal{K}\in\mathcal{T}^0} \int_{\mathcal{Q}^0_{\mathcal{K}}} 
      \tfrac{1}{2}\del{n_t + w_h \cdot n} \envert{u_h - \bar{u}_h}^2 \dif s
      \\
      &+ \int_{\partial\mathcal{E}^N \cap I_0}\max(n_t + \bar{w}_h\cdot n, 0)\envert{\bar{u}_h}^2 \dif s
      - \int_{\partial\mathcal{E}^N \cap I_0} \tfrac{1}{2} \del{n_t + \bar{w}_h\cdot n}\envert{\bar{u}_h}^2 \dif s
      \\
      &-\int_{\mathcal{Q}^0_{\mathcal{K}}}\tfrac{1}{2} n_t \envert{u_h -
        \bar{u}_h}^2 \dif s
      +\int_{\partial\mathcal{E}^N \cap I_0} \tfrac{1}{2} n_t\envert{\bar{u}_h}^2 \dif s
      - \sum_{\mathcal{K}\in\mathcal{T}^0} \int_{\mathcal{K}} \tfrac{1}{2}\partial_t\envert{u_h}^2 \dif x\dif t.      
    \end{split}
  \end{equation}
  We now simplify this expression. We first note that the third and
  fourth terms on the right hand side of \cref{eq:rearrange3} may be
  combined to
  \begin{equation}
    \sum_{\mathcal{K}\in\mathcal{T}^0} \int_{\mathcal{Q}^0_{\mathcal{K}}} 
    \del{\tfrac{1}{2}-\lambda}\del{n_t + w_h \cdot n} \envert{u_h - \bar{u}_h}^2 \dif s  
    =
    \sum_{\mathcal{K}\in\mathcal{T}^0} \int_{\mathcal{Q}^0_{\mathcal{K}}} 
    \tfrac{1}{2}\envert{n_t + w_h \cdot n} \envert{u_h - \bar{u}_h}^2 \dif s.
  \end{equation}
  Note also that the fifth and sixth terms on the right hand side
  of \cref{eq:rearrange3} may be combined to
  \begin{equation}
    \int_{\partial\mathcal{E}^N \cap I_0}\del{\max(n_t + \bar{w}_h\cdot n, 0)-\tfrac{1}{2}\del{n_t + \bar{w}_h\cdot n}}\envert{\bar{u}_h}^2 \dif s
    = 
    \int_{\partial\mathcal{E}^N \cap I_0}\tfrac{1}{2}
    \envert{n_t + \bar{w}_h\cdot n}\envert{\bar{u}_h}^2 \dif s.
  \end{equation}
  We therefore write \cref{eq:rearrange3} as
  \begin{equation}
    \label{eq:rearrange4}
    \begin{split}
      t_h^0(\boldsymbol{w}_h, \boldsymbol{u}_h, \boldsymbol{u}_h)
      :=&
      \sum_{\mathcal{K}\in\mathcal{T}^0} \int_{K^{1}}\envert{u_h}^2 \dif s
      + \sum_{\mathcal{K}\in\mathcal{T}^0} \int_{\mathcal{Q}^0_{\mathcal{K}}} 
      n_t u_h \cdot (u_h - \bar{u}_h) \dif s
      \\
      &+ \sum_{\mathcal{K}\in\mathcal{T}^0} \int_{\mathcal{Q}^0_{\mathcal{K}}} 
      \tfrac{1}{2}\envert{n_t + w_h \cdot n} \envert{u_h - \bar{u}_h}^2 \dif s
      + \int_{\partial\mathcal{E}^N \cap I_0}\tfrac{1}{2}
      \envert{n_t + \bar{w}_h\cdot n}\envert{\bar{u}_h}^2 \dif s
      \\
      &-\int_{\mathcal{Q}^0_{\mathcal{K}}}\tfrac{1}{2} n_t \envert{u_h -
        \bar{u}_h}^2 \dif s
      +\int_{\partial\mathcal{E}^N \cap I_0} \tfrac{1}{2} n_t\envert{\bar{u}_h}^2 \dif s
      - \sum_{\mathcal{K}\in\mathcal{T}^0} \int_{\mathcal{K}} \tfrac{1}{2}\partial_t\envert{u_h}^2 \dif x\dif t.
    \end{split}
  \end{equation}
  Combining \cref{eq:rearrange4} with \cref{eq:alternative_wform_pm},
  \begin{multline}
    \label{eq:rearrange5}
    \sum_{\mathcal{K}\in\mathcal{T}^0} \int_{K^{1}}\envert{u_h}^2 \dif s
    + \sum_{\mathcal{K}\in\mathcal{T}^0} \int_{\mathcal{Q}^0_{\mathcal{K}}} 
    n_t u_h \cdot (u_h - \bar{u}_h) \dif s
    -\int_{\mathcal{Q}^0_{\mathcal{K}}}\tfrac{1}{2} n_t \envert{u_h-\bar{u}_h}^2 \dif s
    \\
    +\int_{\partial\mathcal{E}^N \cap I_0} \tfrac{1}{2} n_t\envert{\bar{u}_h}^2 \dif s
    - \sum_{\mathcal{K}\in\mathcal{T}^0} \int_{\mathcal{K}} \tfrac{1}{2}\partial_t\envert{u_h}^2 \dif x\dif t
    - \int_{\Omega_0}u_h^- \cdot u_h \dif s \le 0.
  \end{multline}
  Using the following identities
  \begin{align*}
    \sum_{\mathcal{K}\in\mathcal{T}^0} \int_{\mathcal{K}} \partial_t\envert{u_h}^2 \dif x\dif t
    =&
       \sum_{\mathcal{K}\in\mathcal{T}^0} \int_{K^1} \envert{u_h}^2 \dif x
       - \sum_{\mathcal{K}\in\mathcal{T}^0} \int_{K^0} \envert{u_h}^2 \dif x
       + \sum_{\mathcal{K}\in\mathcal{T}^0} \int_{\mathcal{Q}^0_{\mathcal{K}}} \envert{u_h}^2 n_t \dif s,\\
    \tfrac{1}{2}n_t\envert{\bar{u}_h}^2 
    =&
       \tfrac{1}{2}|u_h|^2 n_t - n_t |u_h|^2 + n_tu_h\cdot\bar{u}_h 
       +\tfrac{1}{2}n_t |u_h-\bar{u}_h|^2,
  \end{align*}
  and the single-valuedness of $\bar{u}_h$ on facets so that
  \begin{equation*}
    \sum_{\mathcal{K}\in\mathcal{T}^0} \int_{\mathcal{Q}^0_{\mathcal{K}}}
    \tfrac{1}{2}n_t\envert{\bar{u}_h}^2 \dif s = 
    \int_{\partial\mathcal{E}^N \cap I_0}\tfrac{1}{2}n_t\envert{\bar{u}_h}^2 \dif s,
  \end{equation*}
  we simplify \cref{eq:rearrange5} to
  \begin{equation}
    \label{eq:rearrange7}
    \sum_{\mathcal{K}\in\mathcal{T}^0} \int_{K^{1}}\tfrac{1}{2}\envert[0]{u_h}^2 \dif x
    + \sum_{\mathcal{K}\in\mathcal{T}^0} \int_{K^0} \tfrac{1}{2}\envert[0]{u_h}^2 \dif x
    - \int_{\Omega_0}u_h^- \cdot u_h \dif x \le 0.
  \end{equation}
  Since
  \begin{equation}
    \label{eq:usefulequality}
    - \int_{\Omega_0} u_h^- \cdot u_h \dif x = 
    \frac{1}{2} \int_{\Omega_0} \envert[0]{u_h - u_h^-}^2 \dif x
    - \frac{1}{2} \int_{\Omega_0} \envert[0]{u_h}^2 \dif x
    - \frac{1}{2} \int_{\Omega_0} \envert[0]{u_h^-}^2 \dif x,
  \end{equation}
  \cref{eq:rearrange7} implies
  \begin{equation}
    \label{eq:senergystab2}
    \int_{\Omega_1} \envert{u_h}^2 \dif x
    \le \int_{\Omega_0} \envert[0]{u_h^-}^2 \dif x.
  \end{equation}
  energy-stability is now proven for all $n > 0$ by using $u_h$ from
  space-time slab $\mathcal{E}_{n-1}$ as initial condition for the
  discretization in space-time slab $\mathcal{E}_n$. \qed
\end{proof}

\begin{proposition}[Local momentum conservation]
  \label{prop:local_mom_conservation}
  The space-time HDG and EHDG methods defined in
  \cref{eq:discretization} locally conserve momentum.
\end{proposition}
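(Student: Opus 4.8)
The plan is to test the discrete equations~\cref{eq:discretization} against carefully chosen test functions that are supported on a single space-time element $\mathcal{K} \in \mathcal{T}^n$, and then read off a local momentum balance from the resulting identity, mimicking the standard HDG argument for local conservation (cf.\ \cite[Prop.~2]{Rhebergen:2018}). Concretely, for a fixed element $\mathcal{K}$ and a fixed constant vector $c \in \mathbb{R}^d$, I would take $v_h = c\,\chi_{\mathcal{K}}$ (the constant $c$ on $\mathcal{K}$, zero elsewhere), $\bar v_h = 0$, and $\boldsymbol q_h = 0$ in~\cref{eq:discretization}. Since $c$ is constant, all volume terms involving $\nabla v$ or $\partial_t v$ vanish, and all penalty/consistency facet terms in $a_h^n$ that pair against $(v-\bar v)$ reduce to $\int_{\mathcal{Q}_{\mathcal{K}}} (\cdots)\cdot c\,\dif s$; collecting the surviving terms from $t_h^n$, $a_h^n$, $b_h^n$ and $L_h^n$ produces a balance of the form ``(outflow momentum flux through $K^{n+1}$) $+$ (diffusive, pressure and advective fluxes through $\mathcal{Q}^n_{\mathcal{K}}$) $=$ (body force on $\mathcal{K}$) $+$ (inflow momentum through $K^n$)'', which is precisely local conservation of momentum on the space-time element.

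The key steps, in order, are: (i) substitute the test function $v_h = c\chi_{\mathcal{K}}$, $\bar v_h = 0$, $\boldsymbol q_h=0$ and discard every term killed by $\nabla c = 0$ and $\partial_t c = 0$; (ii) use the divergence-free and divergence-conforming properties of $u_h$ established in~\Cref{prop:local_mass_conservation} so that the trilinear form $t_h^n$ may be replaced by its reduced form~\cref{eq:trilinform_hr}, whose only surviving contributions against a cellwise-constant $v$ are the $\int_{K^{n+1}} u_h\cdot c\,\dif x$ term, the $\int_{\mathcal{Q}^n_{\mathcal{K}}}(n_t+u_h\cdot n)(u_h+\lambda(\bar u_h-u_h))\cdot c\,\dif s$ term, and — if $\mathcal{K}$ abuts $\partial\mathcal{E}^N$ — the $\max(n_t+\bar u_h\cdot n,0)\bar u_h\cdot c$ term; (iii) rewrite the $\int_{K^{n+1}}$ contribution together with the initial-data term in $L_h^n$, using that $u_h^- = u_h|_{K^n}$ on the shared facet (and, for $n=0$, the divergence-free projection of $u_0$), so that the ``$K^n$'' face enters as the incoming momentum flux; (iv) since $c\in\mathbb{R}^d$ was arbitrary, strip it off to obtain the vector identity, and identify each surviving surface integral as the appropriate numerical momentum flux (advective $(\widehat{n_t+u_h\cdot n})\,\widehat{u}_h$, pressure $\bar p_h n$, and viscous $-\nu\,\widehat{\partial_n u_h}$ together with the penalty contribution), thereby exhibiting a conservative flux that is single-valued on each facet of $\mathcal{Q}^n_{\mathcal{K}}$.

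The main obstacle will be step~(iv): verifying that the numerical momentum flux appearing on $\mathcal{Q}^n_{\mathcal{K}}$ is genuinely single-valued across an interior facet — i.e.\ that the flux seen from $\mathcal{K}^+$ equals the flux seen from $\mathcal{K}^-$ — which is what upgrades an elementwise balance into a bona fide local conservation statement. This requires testing~\cref{eq:discretization} additionally with $v_h = 0$ and a facet function $\bar v_h$ supported on a single interior facet $S\in\mathcal{S}_I^n$ (here the continuity constraint $\bar v_h \in \bar V_h^n \cap C(\Gamma^n)$ for the EHDG method must be respected, so $\bar v_h$ is continuous but may still be localized to the patch of elements meeting $S$), and reading off that the sum of the one-sided fluxes against $\bar v_h$ vanishes; combined with the divergence-conforming property this forces the advective and pressure parts of the flux to be single-valued, while the viscous-plus-penalty part is single-valued by the usual HDG argument. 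I expect the EHDG case to need slightly more care than the HDG case precisely because $\bar v_h$ cannot be chosen to be supported on one facet alone, so the facet-localization must be done on a vertex/edge patch and the continuity of $\bar v_h$ exploited to still isolate the flux on $S$.
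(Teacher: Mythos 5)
Your steps (i)--(iii) reproduce the paper's proof essentially verbatim: one replaces $t_h^n$ by the conservative form \cref{eq:trilinform_hr} (legitimate for the HDG and EHDG methods because, by \Cref{prop:local_mass_conservation}, $u_h$ is divergence-free and divergence-conforming), tests \cref{eq:discretization} with $v_h$ equal to a constant unit vector on a single element, $\bar{v}_h = 0$ and $\boldsymbol{q}_h = 0$, and reads off the elementwise balance
\begin{equation*}
  \int_{K^{n+1}} u_h \dif x - \int_{K^n} u_h^- \dif x
  = \int_{\mathcal{K}} f \dif x \dif t - \int_{\mathcal{Q}_{\mathcal{K}}} \hat{\sigma}_h \dif s,
\end{equation*}
with $\hat{\sigma}_h = (n_t + u_h\cdot n)\bigl(u_h + \lambda(\bar{u}_h - u_h)\bigr) + \bigl[\bar{p}_h\mathbb{I} - \nu\nabla u_h - \tfrac{\nu\alpha}{h}(\bar{u}_h - u_h)\otimes n\bigr]n$. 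That is where the paper stops: as in \cite{Labeur:2012}, local momentum conservation is asserted \emph{in terms of this numerical flux}, which is the standard convention for (hybridized) discontinuous Galerkin methods.

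Your step (iv) is where the plan goes wrong, and it is also more than the proposition claims. Testing with $v_h = 0$ and a facet function $\bar{v}_h$ yields only that the sum of the two one-sided fluxes on an interior facet is $L^2$-orthogonal to the facet velocity space. For the EHDG method, where $\bar{v}_h$ must lie in $\bar{V}_h^n \cap C(\Gamma^n)$, this is strictly weaker than pointwise single-valuedness, and indeed the paper remarks immediately after the proposition that the normal component of $\hat{\sigma}_h$ is continuous across facets \emph{only} for the space-time HDG method. Even in the HDG case the advective part of $\hat{\sigma}_h$ restricted to a facet is a polynomial of degree up to $2k$ while the facet test space has degree $k$, so the orthogonality you would obtain does not upgrade to pointwise equality of the one-sided fluxes. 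In short, the ``genuine single-valuedness'' you set as the criterion for a bona fide conservation statement is false for the EHDG method and is not part of what is being proved; you should drop step (iv) and state the conclusion as the elementwise balance above, with the explicit (weakly single-valued) numerical flux $\hat{\sigma}_h$.
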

\begin{proof}
  The proof is similar to that of \cite[Prop. 4.3]{Labeur:2012}. In
  \cref{eq:discretization}, using the conservative form of the
  trilinear form \cref{eq:trilinform_hr}, set
  $\boldsymbol{v}_h=(e_j, 0)$ on $\mathcal{K}$, where $e_j$ is a
  canonical unit basis vector, $\boldsymbol{v}_h=0$ on
  $\mathcal{T}^n \backslash \mathcal{K}$, and $\boldsymbol{q}_h=0$ on
  $\mathcal{T}^n$:
  \begin{equation}
    \int_{K^{n+1}} u_h \cdot e_j \dif x - \int_{K^n} u_h^- \cdot e_j \dif x
    =
    \int_{\mathcal{K}} f \cdot e_j \dif x \dif t
    -
    \int_{\mathcal{Q}_{\mathcal{K}}} \hat{\sigma}_h \cdot e_j \dif s,
  \end{equation}
  where $\hat{\sigma}_h$ is the `numerical' momentum flux in the
  space-time normal direction on cell boundaries given by
  \begin{equation}
    \hat{\sigma}_h = (n_t + u_h \cdot n)(u_h + \lambda(\bar{u}_h - u_h))
    + \sbr[1]{\bar{p}_h\mathbb{I} - \nu \nabla u_h - \tfrac{\nu \alpha}{h}(\bar{u}_h - u_h)\otimes n} n.
  \end{equation}
  It follows that 
  \begin{equation}
    \int_{K^{n+1}} u_h \dif x - \int_{K^n} u_h^- \dif x
    =
    \int_{\mathcal{K}} f \dif x \dif t
    -
    \int_{\mathcal{Q}_{\mathcal{K}}} \hat{\sigma}_h \dif s \qquad \forall \mathcal{K} \in \mathcal{T},
  \end{equation}
  and the result follows. \qed
\end{proof}

As remarked in \cite{Labeur:2012}, local momentum conservation is in
terms of the numerical flux $\hat{\sigma}_h$. This is a typical
feature of (space-time) discontinuous Galerkin methods. It should be
noted, however, that the normal component of the `numerical' momentum
flux is continuous across facets only in the case of the space-time
HDG method.

\begin{remark}
  A discretization of the conservative form of the momentum equation
  is required to conserve momentum. For the space-time EDG method to
  be energy-stable, it requires a skew-symmetric formulation of the
  momentum advection term, see \Cref{prop:sthdg_edghdg_energy}. The
  space-time EDG method, therefore, cannot be momentum conserving.
\end{remark}

We end this section discussing the number of globally coupled
degrees-of-freedom. Due to static condensation, the number of globally
coupled degrees-of-freedom are determined only by the facet velocity
and facet pressure function spaces.

The facet pressure space for the space-time HDG and EHDG methods are
identical. The difference between these two methods lies therefore in
the facet velocity approximation; in the space-time HDG method the
facet velocity is discontinuous across facets while it is continuous
in the space-time EHDG method. Using a continuous facet velocity
significantly decreases the number of globally coupled
degrees-of-freedom compared to using a discontinuous facet velocity,
especially in higher dimensions.

The facet velocity space for the space-time EHDG and EDG methods are
identical, but the facet pressure space differs. In the space-time
EHDG method the facet pressure approximation is discontinuous across
facets. It is continuous across facets in the space-time EDG
method. However, since the facet pressure is a scalar, the reduction
in the number of globally coupled degrees-of-freedom when replacing a
discontinuous facet pressure space by a continuous facet pressure
space is less significant than in the case of the facet velocity
space.

% It also is possible to use polynomials of degree $k-1$ for both the
% facet velocity and facet pressure unknown. For the space-time HDG and
% EHDG method, however, this will result in a velocity that is no longer
% divergence-conforming. Reducing the polynomial degree of the facet
% unknowns, therefore, is advantageous only for the space-time EDG
% method, as this reduces the number of globally coupled
% degrees-of-freedom without having any negative effects on other
% properties of the scheme.

We summarize the properties of the space-time HDG, EHDG, and EDG
methods in \cref{tab:properties}.

\begin{table}[tbp]
  \centering
  \begin{tabular}{c|ccc}
    \hline
    & ST-HDG & ST-EHDG & ST-EDG \\
    \hline
    divergence-free velocity       & $\checkmark$ & $\checkmark$ & $\checkmark$ \\ \hline
    divergence-conforming velocity & $\checkmark$ & $\checkmark$ & $\times$     \\ \hline
    energy-stable                  & $\checkmark$ & $\checkmark$ & $\checkmark$ \\ \hline
    locally momentum conserving    & $\checkmark$ & $\checkmark$ & $\times$     \\ \hline
    number of degrees-of-freedom   & largest      & significantly less & slightly less \\
                                   &              & than ST-HDG        & than ST-EHDG  \\ \hline
  \end{tabular}
  \caption{Comparison of the properties of the space-time HDG, EHDG and EDG methods.}
  \label{tab:properties}
\end{table}

%------------------------------------------------------------------------------
\section{Numerical examples}
\label{s:examples}

All simulations in this section were carried out using the Modular
Finite Element Method (MFEM) library \cite{mfem-library}. Furthermore,
as is common for interior penalty DG methods \cite{Labeur:2012,
  Rhebergen:2018}, we choose a penalty parameter of the form
$\alpha = ck^2$, where $k$ is the order of the polynomial
approximation and $c$ a constant. We take $c=6$ in all our
simulations.

The non-linear problem \cref{eq:discretization} in each time-slab
$\mathcal{E}^n$ ($n=0,\cdots, N-1$) is solved by Picard iteration
\cref{eq:Picard}. We set $u_h^0 = 0, p_h^0 = 0$ and use as stopping
criterion
\begin{equation}
  \label{eq:stopping_criterion}
  \max \cbr{ \frac{\norm[0]{u_h^k - u_h^{k-1}}_{\infty}}{\norm[0]{u_h^k - u_h^0}_{\infty}},
    \frac{\norm[0]{p_h^k - p_h^{k-1}}_{\infty}}{\norm[0]{p_h^k - p_h^0}_{\infty}} }
  < \text{TOL},
\end{equation}
where $\norm[0]{\cdot}_\infty$ is the discrete $l^\infty$-norm, and
$\text{TOL}$ the desired tolerance.

Let
$U\in \mathbb{R}^{\dim \boldsymbol{V}^n_h}, P\in \mathbb{R}^{\dim
  Q^n_h}, \bar{U}\in \mathbb{R}^{\dim \bar{\boldsymbol{V}}^n_h},
\bar{P}\in \mathbb{R}^{\dim \bar{Q}^n_h}$ be the vectors of
coefficients of
$\boldsymbol{u}_h, p_h, \bar{\boldsymbol{u}}_h, \bar{p}_h$ with
respect to the basis of the corresponding vector spaces. Then
$W^T = [U^T\ P^T]$ is the vector of all element degrees-of-freedom and
$\bar{W}^T = [\bar{U}^T\ \bar{P}^T]$ is the vector of all facet
degrees-of-freedom. At each Picard iteration~\cref{eq:Picard} the
linear system can be written in the following block-matrix form:
\begin{equation}
  \label{eq:before_SC}
  \begin{bmatrix}
    A & B \\ C & D
  \end{bmatrix}
  \begin{bmatrix}
    W \\ \bar{W}
  \end{bmatrix}
  =
  \begin{bmatrix}
    F \\ \bar{F}
  \end{bmatrix}.
\end{equation}
As with all other hybridizable discontinuous Galerkin methods, $A$ has
a block-diagonal structure. It is therefore cheap to eliminate $W$
from~\cref{eq:before_SC} to obtain the reduced linear system
\begin{equation}
  \label{eq:SC}
(-CA^{-1}B + D) \bar{W} = \bar{F} - CA^{-1}F.
\end{equation}
We use the direct solver of MUMPS~\cite{MUMPS:1,MUMPS:2} through
PETSc~\cite{petsc-web-page,petsc-user-ref,petsc-efficient} to solve
this system of linear equations. Given $\bar{W}$ we can then compute
$W$ cell-wise according to $W = A^{-1}(F - B\bar{W})$.

%------------------------------------------------------------------------------
\subsection{Convergence rates}
\label{ss:convrates}
In this first test case, we compute the rates of convergence of the
space-time HDG, EHDG and EDG methods applied to the Navier--Stokes equations on a
time-dependent domain. Introducing first a uniform triangular mesh for
the unit square, the mesh vertices $(x_1, x_2)$ for the deforming
domain $\Omega(t)$ are obtained at any time $t\in[0,1]$ by the
following relation
\begin{equation*}
  x_i = x_i^0 + 0.05 (1 - x_i^0)\sin( 2\pi (\tfrac{1}{2} - x_i^* + t) ) 
  \qquad  i = 1,2,  
\end{equation*}
where $(x_1^0, x_2^0) \in [0,1]^2$ are the vertices of the uniform
mesh and $(x_1^*, x_2^*) = (x_2^0, x_1^0)$. 

Let
$\partial\mathcal{E}^N := \cbr{(t,x_1,x_2)\in\partial\mathcal{E} :
  x_1=1}$ and
$\partial\mathcal{E}^D = \partial \mathcal{E} \setminus
(\partial\mathcal{E}^N \cup \Omega(0) \cup \Omega(1))$. The boundary
conditions and source term $f$ in~\cref{eq:navierstokes_a} are chosen
such that the exact solution is given by
\begin{equation*}
  u = 
  \begin{bmatrix}
    2 + \sin(2\pi (x_1 - t)) \sin(2\pi (x_2 - t))\\
    2 + \cos(2\pi (x_1 - t)) \cos(2\pi (x_2 - t))
  \end{bmatrix},
  \qquad
  p = \sin(2\pi (x_1 - t)) \cos(2\pi (x_2 - t)).
\end{equation*}
The deforming mesh and pressure solution at three different points
in time are shown in \cref{fig:mesh_movement}.
\begin{figure}[tbp]
  \begin{center}
    \includegraphics[width=.32\linewidth]{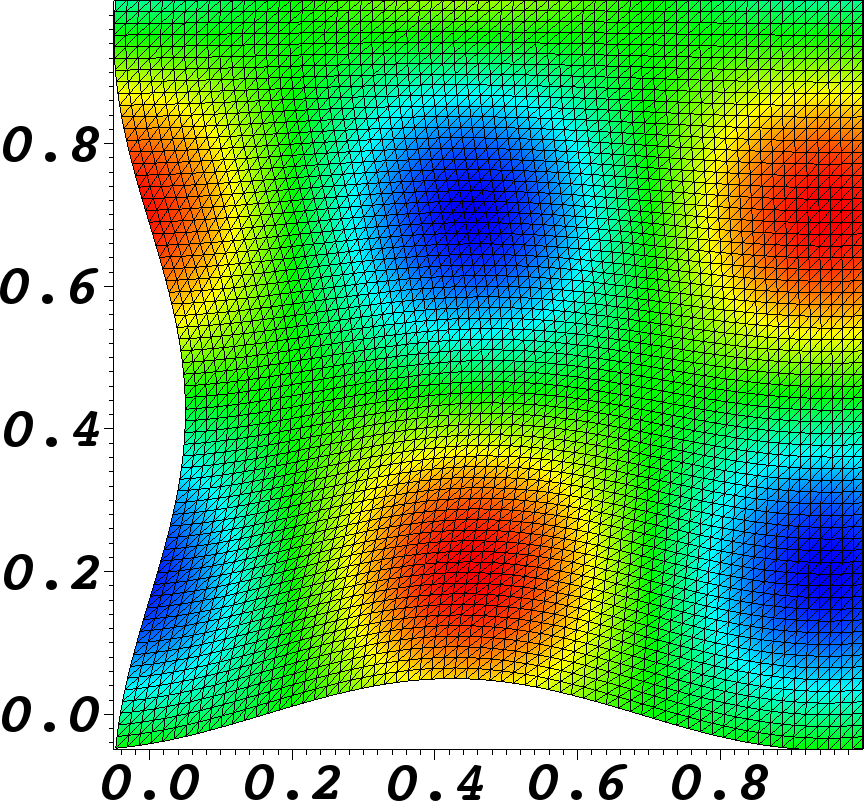} 
    \includegraphics[width=.32\linewidth]{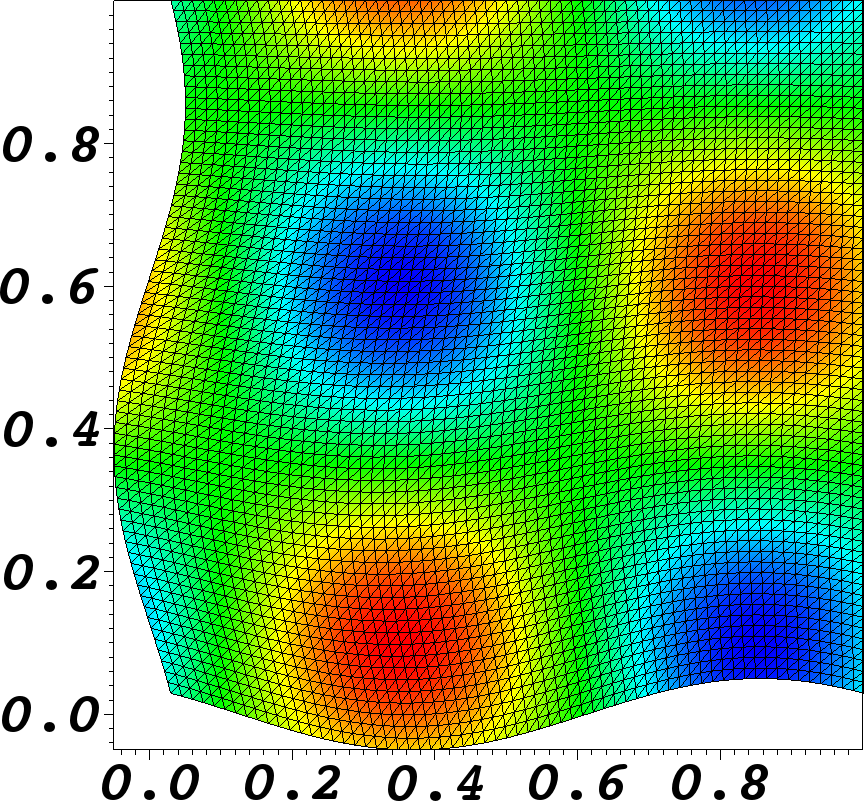}
    \includegraphics[width=.32\linewidth]{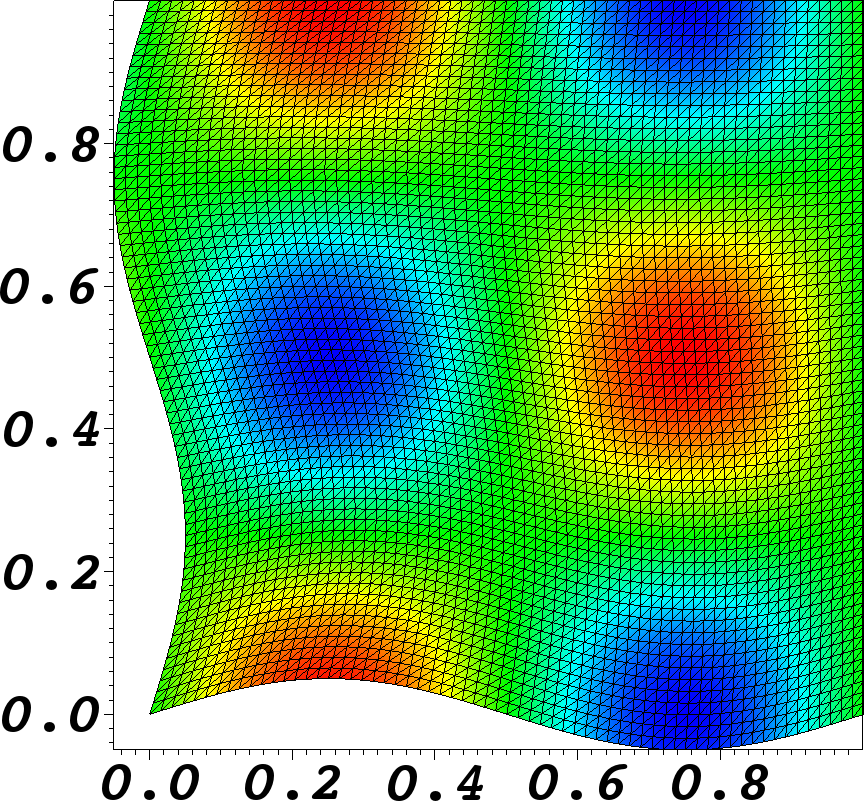}
    \caption{The mesh and pressure solution at different points in time for the test case
      described in \cref{ss:convrates}. From left to right the mesh and pressure solution at
      $t = 0.2,\ 0.6,\ 1.0$.}
    \label{fig:mesh_movement}
  \end{center}
\end{figure}

We consider the rates of convergence for polynomial degrees $k=2$ and
$k=3$ and on a succession of refined space-time meshes. The coarsest
space-time mesh consists of $6 \cdot 8^2$ tetrahedra per space-time
slab with $\Delta t = 0.05$. For the Picard
iteration \cref{eq:stopping_criterion} we set $\text{TOL} = 10^{-12}$.

The rates of convergence over the entire space-time domain
$\mathcal{E}$, with $\nu = 10^{-7}$, are shown in
\cref{fig:error_vs_dof}. We observe that all space-time methods
converge optimally, i.e., the velocity error is of order
$\mathcal{O}(h^{k+1})$ and the pressure error is of order
$\mathcal{O}(h^k)$. We observe that the space-time EDG and EHDG
methods give smaller errors than the space-time HDG method for the
same number of globally coupled degrees-of-freedom.

\begin{figure}[tbp]
  \begin{center}
    \includegraphics[width=0.49\linewidth]{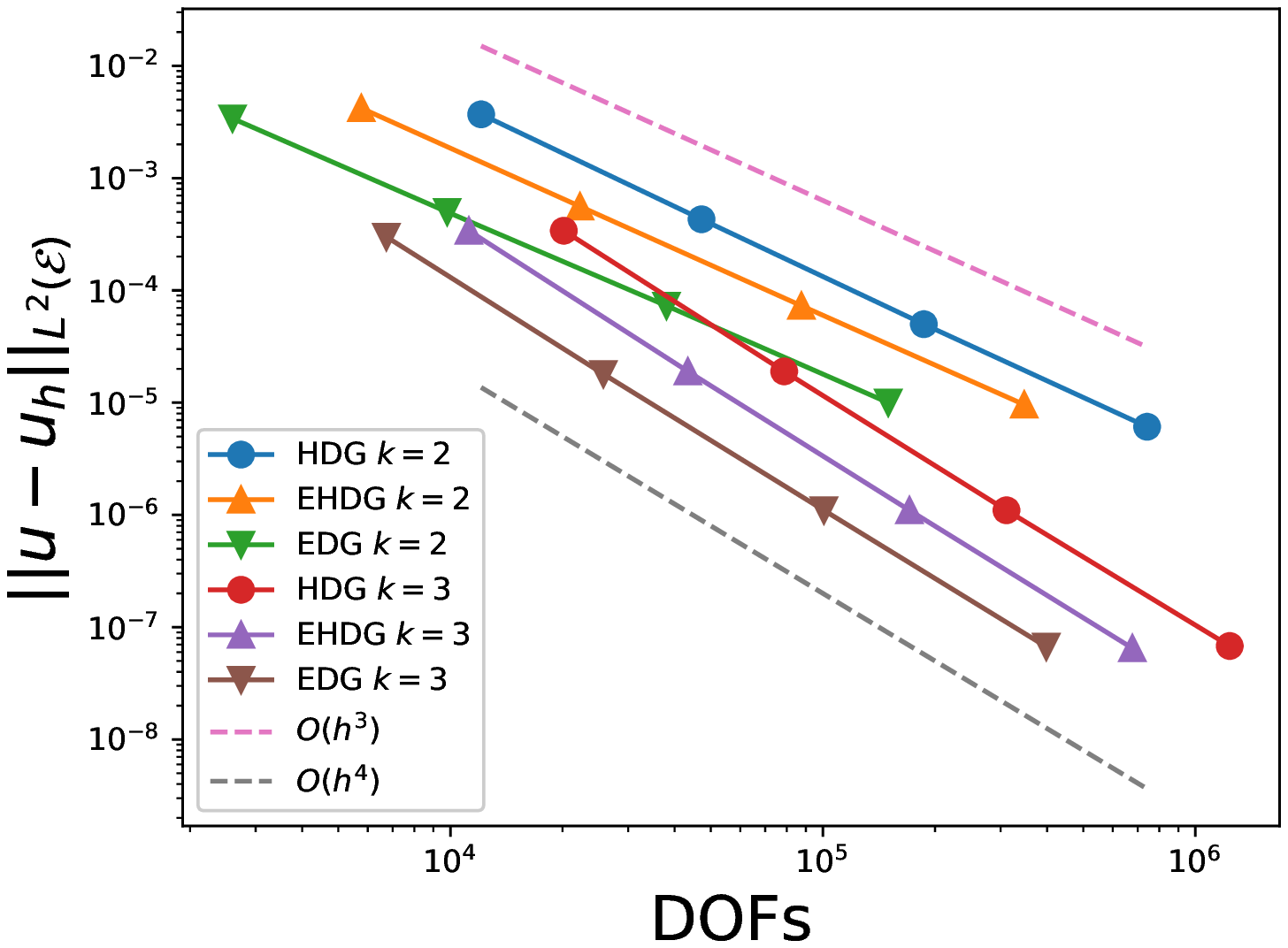}
    \includegraphics[width=0.49\linewidth]{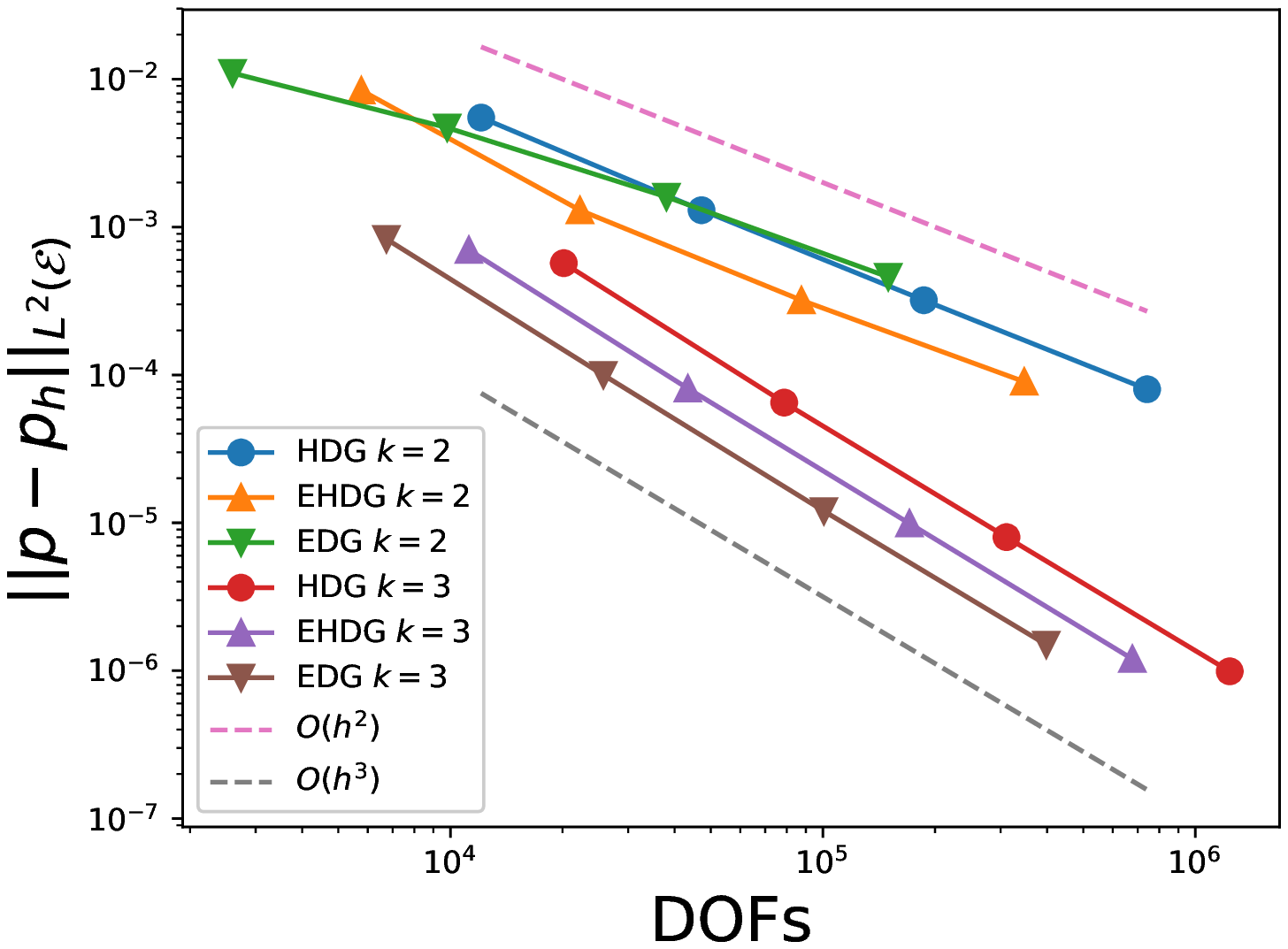}
    \caption{$L^2$-norm of the error of the velocity and the pressure
      on $\mathcal{E}$, with $\nu = 10^{-7}$, plotted against the
      number of globally coupled unknowns.}
  \label{fig:error_vs_dof}
  \end{center}
\end{figure}

From \Cref{prop:local_mass_conservation} and \Cref{rem:edg-div-free} we
know that the error in the divergence of the approximate velocity is
of machine precision for all methods, even on deforming domains.
However, unlike the space-time HDG and EHDG methods, the space-time
EDG method is not divergence-conforming. We, therefore, compute the
$L^2$-norm of the jump of the normal component of the velocity across
facets in the entire space-time domain $\mathcal{E}$ as this is a
measure for the lack of mass conservation. \Cref{tab:edg_jump} shows
optimal rates of convergence when the kinematic viscosity is
$\nu = 10^{-4}$. The rates of convergence for the jump of the normal
component of the velocity across facets is only sub-optimal for the
highly advection-dominated case ($\nu = 10^{-7}$).

\begin{table}[tbp]
  \centering
  \begin{tabular}{cc|cc|cc}
    \hline
    Cells per slab  & Nr. of slabs & $\norm{\jump{u_h \cdot n}}$ & rate 
    & $\norm{\jump{u_h \cdot n}}$ & rate \\ 
    \hline
    \multicolumn{2}{c|}{$\nu = 10^{-4}$} &
    \multicolumn{2}{c|}{$k = 2$} &
    \multicolumn{2}{c}{$k = 3$} \\
    \hline
    $384$   & $20$  & 2.1e-2 &  -  & 1.7e-3 &  -  \\ 
    $1536$  & $40$  & 4.1e-3 & 2.3 & 1.4e-4 & 3.6 \\ 
    $6144$  & $80$  & 7.0e-4 & 2.5 & 9.7e-6 & 3.8 \\ 
    $24576$ & $160$ & 1.0e-4 & 2.8 & 6.6e-7 & 3.9 \\ 
    \hline
    \multicolumn{2}{c|}{$\nu = 10^{-7}$} &
    \multicolumn{2}{c|}{$k = 2$} &
    \multicolumn{2}{c}{$k = 3$} \\
    \hline
    $384$   & $20$  & 2.2e-2 &  -  & 1.9e-3 &  -   \\ 
    $1536$  & $40$  & 4.9e-3 & 2.2 & 1.9e-4 & 3.4   \\ 
    $6144$  & $80$  & 1.0e-3 & 2.3 & 1.8e-5 & 3.4   \\ 
    $24576$ & $160$ & 2.0e-4 & 2.4 & 1.6e-6 & 3.5   \\ 
    \hline
  \end{tabular}
  \caption{Rates of convergence for the jump of the normal velocity
    over facets for the test case describe in \cref{ss:convrates}
    using the space-time EDG discretization.}
  \label{tab:edg_jump}
\end{table}

%------------------------------------------------------------------------------
\subsection{Flow round a rigid cylinder}
\label{ss:rigid_cylinder}

We next consider flow round a cylinder \cite{Lehrenfeld:2016,
  Schaefer:1996}. We solve the Navier--Stokes equations on a fixed
spatial domain $[0,\ 2.2]\times [0,\ 0.41]$ with flow past a
cylindrical obstacle with radius $r = 0.05$ centred at
$(x_1, x_2) = (0.2, 0.2)$. We impose a homogeneous Neumann boundary
condition on the outflow boundary at $x_1=2.2$, while
$u = [6x_2(0.41-x_2)/0.41^2, 0]^T$ is imposed on the inflow boundary
at $x_1 = 0$. On the cylinder and walls $x_2=0$ and $x_2=0.41$ we
impose $u = [0,0]^T$. The initial condition is obtained by solving the
steady Stokes problem. Finally, we set $\nu = 10^{-3}$, $k=3$,
$\Delta t = 5 \cdot 10^{-3}$, $\text{TOL} = 10^{-10}$, and use a
space-time mesh consisting of $9666$ tetrahedra per slab. The velocity
magnitude at final time $t=5$ is shown in \cref{fig:cyl_final}.

\begin{figure}[tbp]
  \begin{center}
    \includegraphics[width=\linewidth]{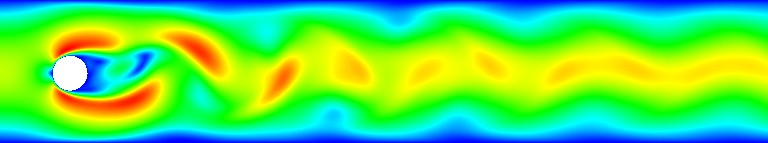}
    \caption{The velocity magnitude of flow around a cylinder, as
      described in \cref{ss:rigid_cylinder}, at $t = 5$ using $3222$
      triangles and $k = 3$ and using the space-time EDG method.}
    \label{fig:cyl_final}
  \end{center}
\end{figure}

Let $\Gamma_c$ denote the space-time boundary of the cylinder. We
define the lift and drag coefficients as
\begin{equation}
  \label{eq:clcd}
  C_L = \frac{1}{r \Delta t} \int_{\Gamma_c} (\sigma n)\cdot e_1,
  \qquad 
  C_D = \frac{1}{r \Delta t} \int_{\Gamma_c} (\sigma n)\cdot e_2,
\end{equation}
where $\sigma = p \mathbb{I} - \nu \nabla u$, $e_1$ and $e_2$ are the
unit vectors in the $x_1$ and $x_2$ directions,
respectively. \Cref{table:cl_cd} contains the minimum and maximum
$C_L$ and $C_D$ values computed during the simulations. These values
compare well to results found in literature \cite{Lehrenfeld:2016,
  Schaefer:1996}.

\Cref{table:cl_cd} also contains the total number globally coupled
degrees-of-freedom for each method. It is clear that even though the
space-time EHDG and EDG have less degrees-of-freedom than the
space-time HDG method, their output is similar.

\begin{table}[tbp]
  \centering
  \begin{tabular}{cccccc}
    \hline
    Method & $\min C_L$ & $\max C_L$ & $\min C_D$ & $\max C_D$ & Nr. of unknowns \\ 
    \hline
   ST-HDG   & -1.014 & 0.98  & 3.153 & 3.219 & 489840 \\
   ST-EHDG & -1.018 & 0.975 & 3.153 & 3.219 & 268944 \\
   ST-EDG   & -1.015 & 0.978 & 3.155 & 3.221 & 158496 \\
    \hline
  \end{tabular}
  \caption{Comparison of the minimum and the maximum lift and drag
    coefficients computed using different space-time methods for flow
    around a rigid cylinder. See \cref{ss:rigid_cylinder}.}
  \label{table:cl_cd}
\end{table}

%------------------------------------------------------------------------------
\subsection{Flow round a forced oscillating cylinder}
\label{ss:forced_cylinder}

In this test case we consider flow round a forced oscillating cylinder
\cite{Calderer:2010}. We solve the Navier--Stokes equations on a
spatial domain $[-6,\ 20]\times [-6,\ 6]$ with flow past a cylindrical
obstacle with radius $r = 0.5$ centred initially at
$(x_1,x_2) = (0, 0)$. We prescribe a vertical oscillatory movement of
the centre of the cylinder for $t \ge 0$ by
\begin{equation*}
  x_2(t) = 0.48\sin(2\pi t/5.94).
\end{equation*}
We apply a homogeneous Neumann boundary condition on the outflow
boundary at $x_1=20$. On the wall boundaries
$x_1 = -6$, $x_2 = 6$, and $x_2 = -6$ we impose $u = [1, 0]^T$, while
$u = [0,0]^T$ is imposed on the cylinder. The initial condition is
obtained by solving the steady Stokes problem and the remaining
parameters are chosen as: $\nu = 10^{-2}$, $\Delta t = 0.025$,
$\text{TOL} = 10^{-9}$, $k=3$, and the computational domain consists
of 20052 tetrahedra per space-time slab.

To accommodate the time-dependent movement of the cylinder, the mesh
is updated at each time step as follows. Nodes inside the spatial box
$\Omega^{\text{in}}(t)=[-2,\ 2] \times [-2+x_2(t),\ 2+x_2(t)]$ move
with the cylinder while nodes outside the spatial box
$\Omega^{\text{out}}=[-4,\ 4] \times [-4,\ 4]$ remain fixed. The
movement of the remaining nodes in
$\Omega^{\text{out}}\backslash\Omega^{\text{in}}(t)$ decreases
linearly with distance. A plot of the mesh when the cylinder is in its
highest and lowest position is given in \cref{fig:forced_cyl_meshes}.
\begin{figure}[tbp]
  \begin{center}
    \subfloat[Cylinder at its highest position.]{\includegraphics[width=.45\linewidth]{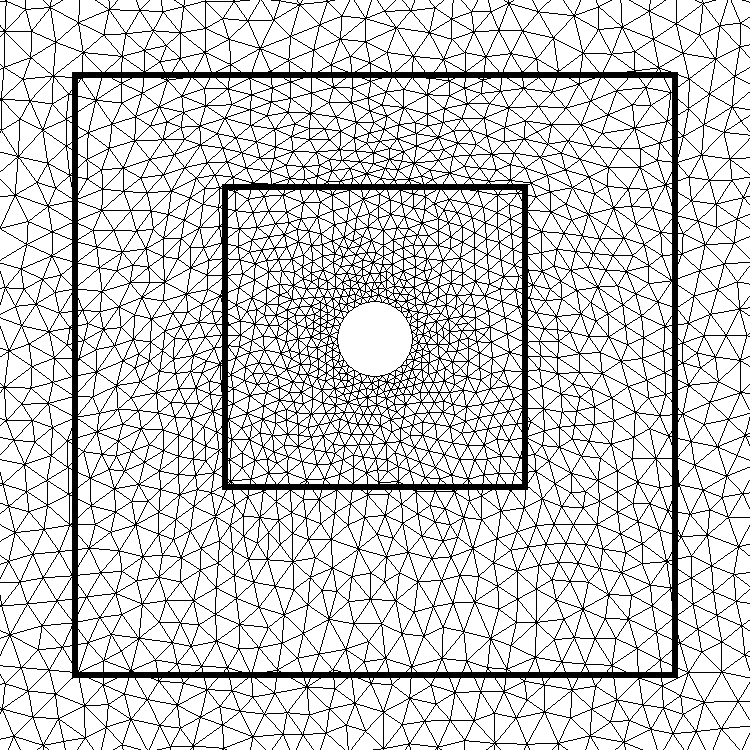}}
    \quad
    \subfloat[Cylinder at its lowest position.]{\includegraphics[width=.45\linewidth]{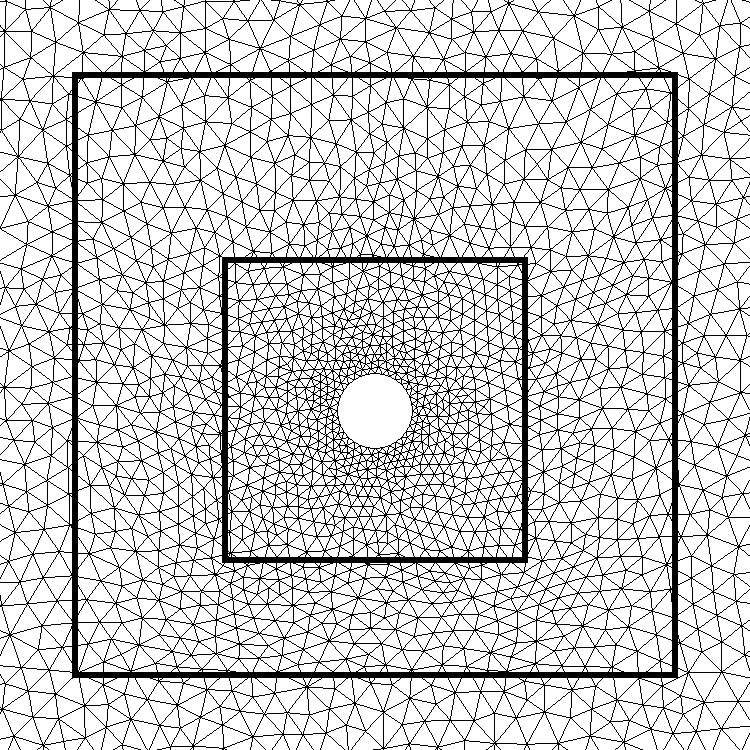}}
    \caption{Mesh deformation during one cycle of the cylinder
      motion. The small square depicts $\Omega^{\text{in}}(t)$ which
      moves with the cylinder. The large square depicts
      $\Omega^{\text{out}}$. See \cref{ss:forced_cylinder}.}
    \label{fig:forced_cyl_meshes}
  \end{center}
\end{figure}

We plot the lift and drag coefficients as a function of position in
\cref{fig:forced_cyl_CLCD}. We observe periodic behaviour in the lift
coefficient, and close to periodic behaviour in the drag
coefficient. There is little difference in the solution computed using
the space-time EDG and EHDG methods.

\begin{figure}[tbp]
  \begin{center}
    \includegraphics[width=.45\linewidth]{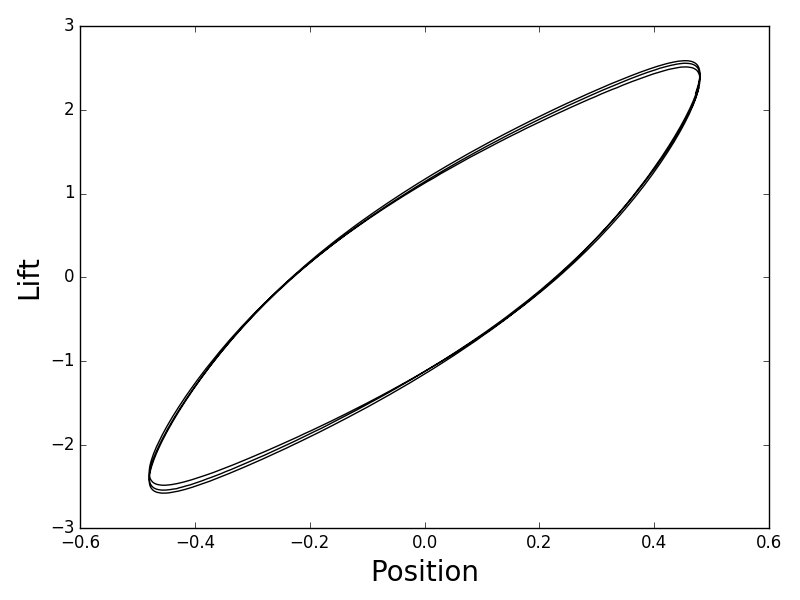}
    \includegraphics[width=.45\linewidth]{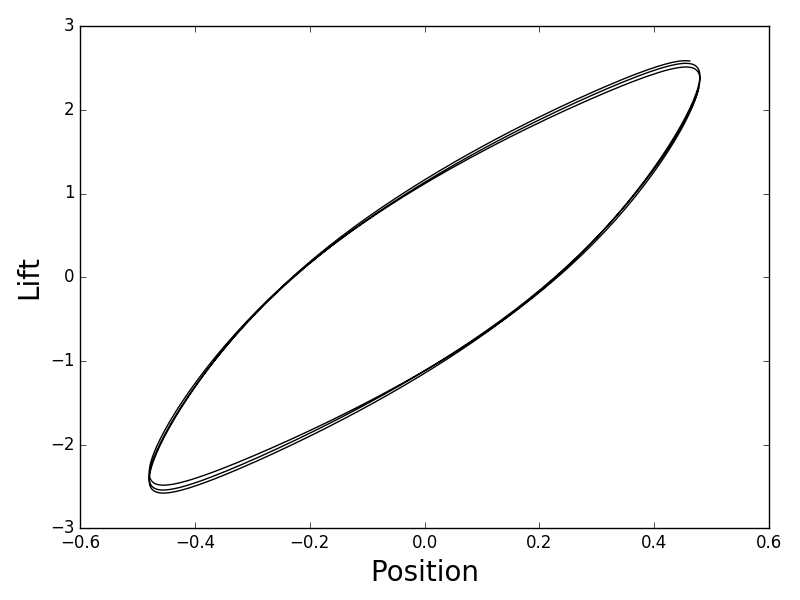}\\
    \includegraphics[width=.45\linewidth]{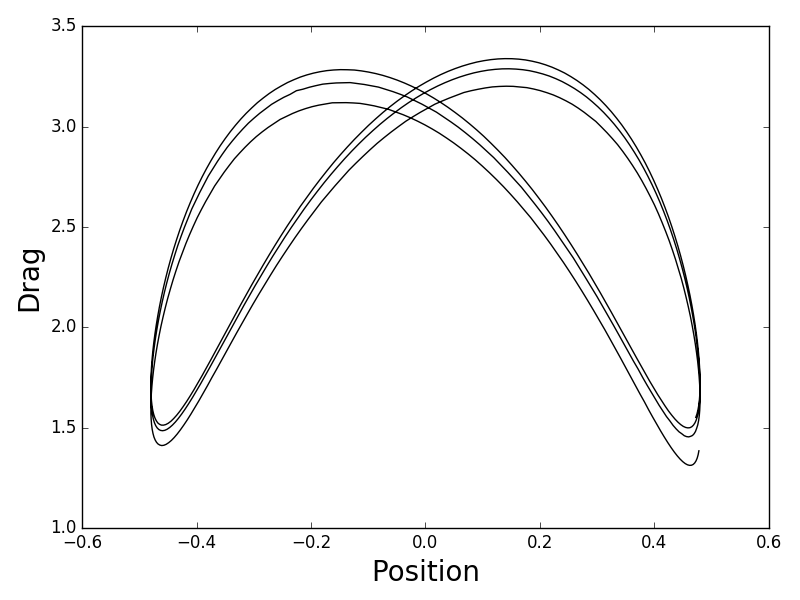}
    \includegraphics[width=.45\linewidth]{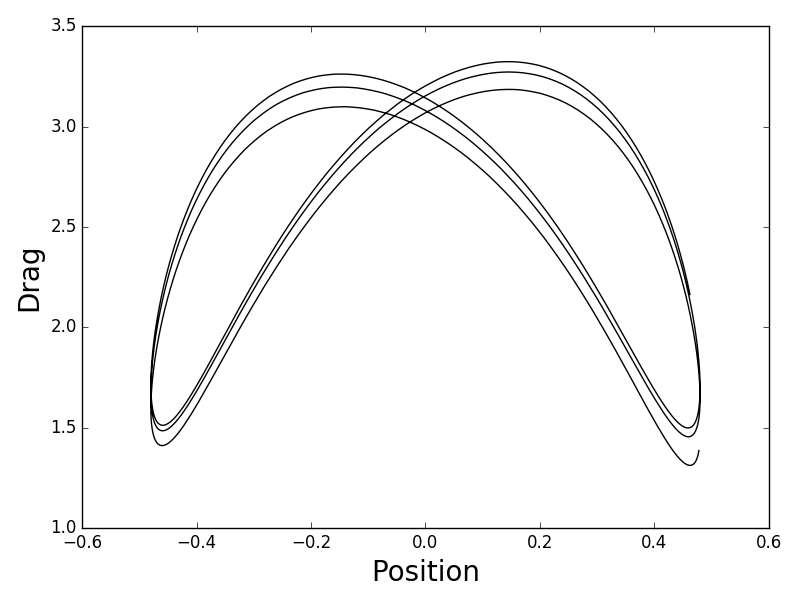}
    \caption{Lift and drag coefficients as a function of the centre of
      the cylinder, for the test case described in
      \cref{ss:forced_cylinder}. Left: the space-time EDG
      method. Right: the space-time EHDG method.}
    \label{fig:forced_cyl_CLCD}
  \end{center}
\end{figure}

The velocity magnitude within one cycle of the cylinder motion
computed using the space-time EDG method is shown in
\cref{fig:forced}. We observe that vortices flow downstreem and that
the flow field around the cylinder at the end of the cycle is similar
to the flow field at the beginning of the cycle. This was observed
also in \cite{Calderer:2010}.

\begin{figure}[tbp]
  \begin{center}
    \subfloat[Solution at $t=17.8$.]{\includegraphics[width=.3\linewidth]{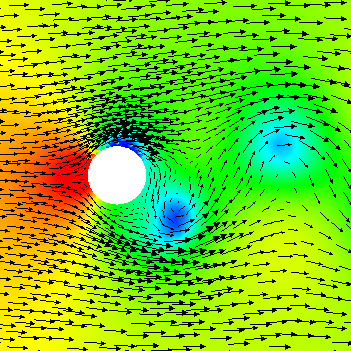}}
    \
    \subfloat[Solution at $t=19.3$.]{\includegraphics[width=.3\linewidth]{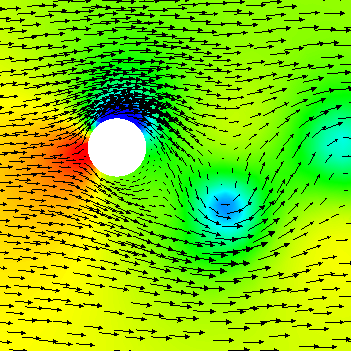}}
    \
    \subfloat[Solution at $t=20.8$.]{\includegraphics[width=.3\linewidth]{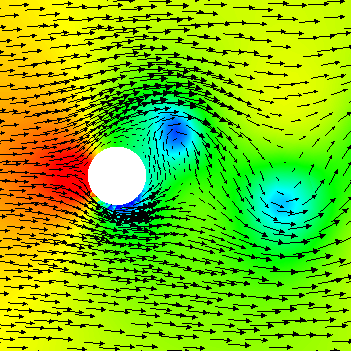}}
    \\
    \subfloat[Solution at $t=22.3$.]{\includegraphics[width=.3\linewidth]{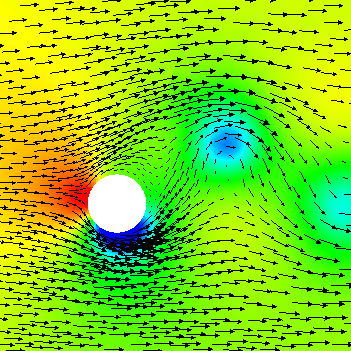}}
    \
    \subfloat[Solution at $t=23.8$.]{\includegraphics[width=.3\linewidth]{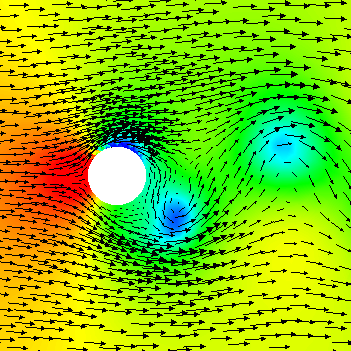}}
    \caption{Velocity vector plot and pressure field around an
      oscillating cylinder, as described in
      \cref{ss:forced_cylinder}. The plots are at different points in
      time in one cycle of the cylinder motion.}
    \label{fig:forced}
  \end{center}
\end{figure}

%------------------------------------------------------------------------------
\subsection{Flow past a pitching and plunging NACA0012 airfoil}
\label{ss:naca0012}

In this final test case we simulate flow around a pitching and
plunging NACA0012 airfoil \cite{Johnson:1994} set in a spatial domain
$[-5,\ 10] \times [-5,\ 5]$. The airfoil is initially at $0^\circ$
angle of attack with trailing edge at $(x_1, x_2) = (-0.5,
0.5)$. Throughout the simulation the trailing edge oscillates
vertically between $x_2 = -0.5$ and $x_2 = 0.5$, while the angle of
attack changes between $-10^\circ$ and $10^\circ$. Both of these
movements happen with a non-dimensional frequency of $0.5$.

The computational domain consists of 24306 tetrahedra per space-time
slab. As parameters we set $k=2$, $\Delta t = 0.01$,
$\text{TOL} = 10^{-7}$, and we set the kinematic viscosity to be
$\nu = 10^{-3}$.

To account for the time-dependent movement of the airfoil, the mesh is
updated at each time step as follows. Nodes within a radius of 1.5
from the trailing edge rotate with the airfoil, nodes outside a radius
of 2 from the trailing edge remain fixed. The vertical movement of the
nodes is treated similarly as in \cref{ss:forced_cylinder}; nodes
inside the spatial box
$\Omega^{\text{in}}(t) = [-3,\ 7] \times [-3 + x_2(t),\ 3 + x_2(t)]$
move with the airfoil, with $x_2(t)$ the $x_2$ coordinate of the
trailing edge, nodes outside the spatial box
$\Omega^{\text{out}} = [-4,\ 8] \times [-4.5\, 4.5]$ remain fixed,
while the movement of the remaining nodes in
$\Omega^{\text{out}} \backslash \Omega^{\text{in}}$ decreases linearly
with distance. We plot the mesh at different instances in time in
\cref{fig:naca_meshes}.

In \cref{fig:naca} we plot the pressure and velocity vector fields,
computed using the space-time EHDG method, for one cycle of the
airfoil motion. When the airfoil is at its highest point, small
vortices detach from the airfoil as the airfoil plunges. In
\cref{fig:naca_a} we see three small vortices, about a chord length
above the airfoil, that detached from the airfoil when it was in its
highest position. These small vortices combine into larger vortices
downstream. A similar process occurs when the airfoil is at its lowest
position, resulting in a street of vortices behind the airfoil. These
observations are in agreement with \cite{Johnson:1994}.

\begin{figure}[tbp]
  \begin{center}
    \subfloat[Mesh at start of cycle.]{\includegraphics[width=.45\linewidth]{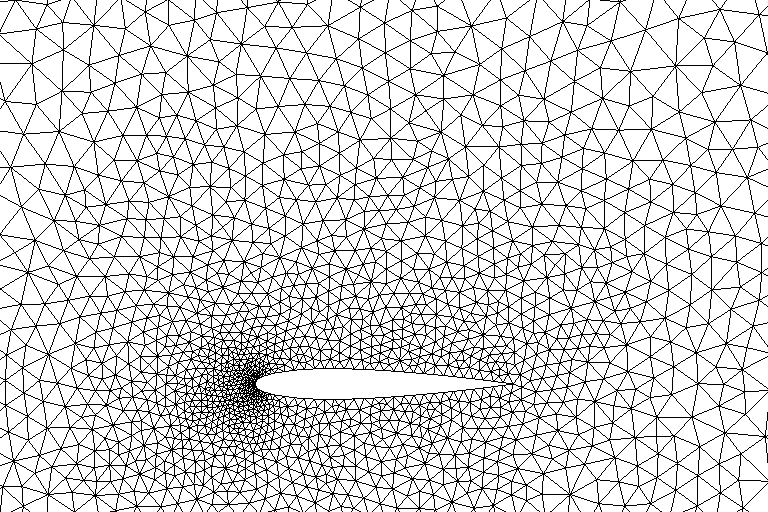}}
    \
    \subfloat[Mesh at quarter of cycle.]{\includegraphics[width=.45\linewidth]{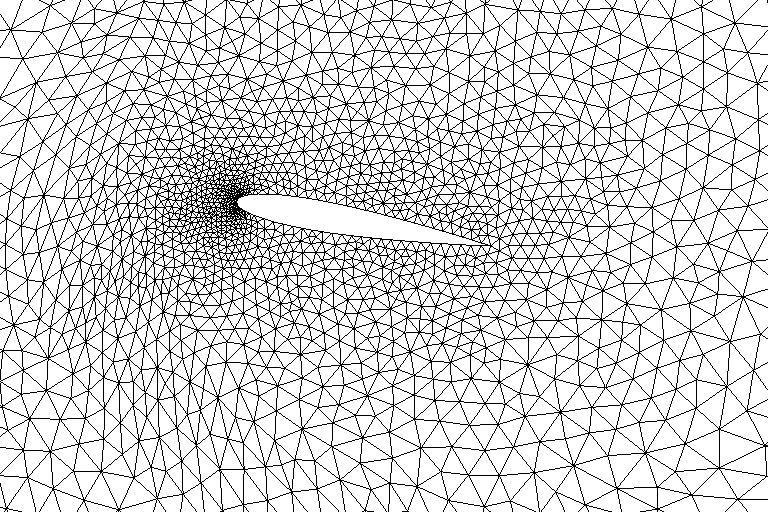}}
    \\
    \subfloat[Mesh halfway through cycle.]{\includegraphics[width=.45\linewidth]{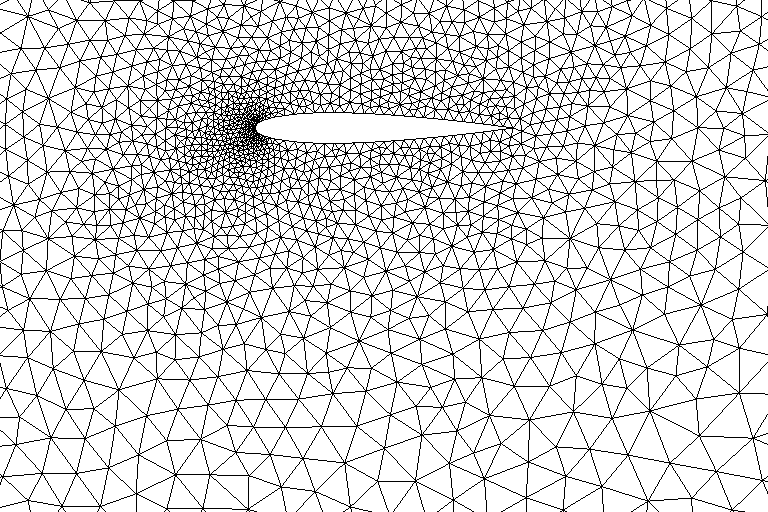}}
    \
    \subfloat[Mesh at three quarters of cycle.]{\includegraphics[width=.45\linewidth]{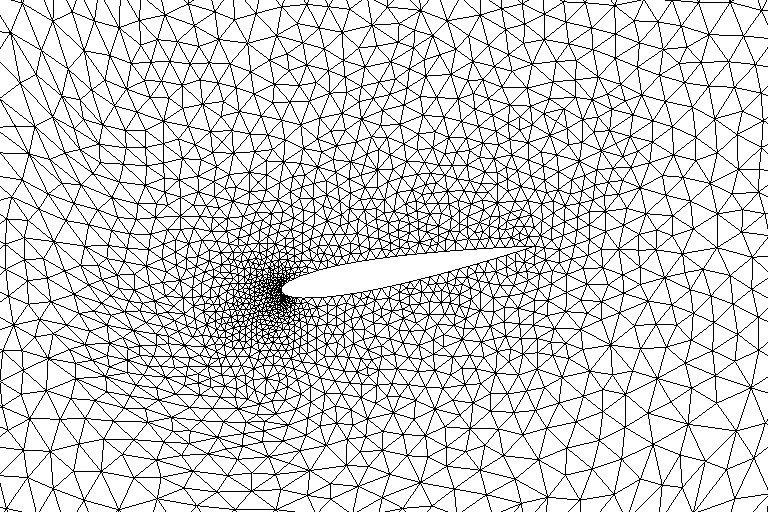}}
    \caption{Mesh deformation of the pitching and plunging NACA0012
      airfoil during one cycle of motion. See \cref{ss:naca0012}.}
    \label{fig:naca_meshes}
  \end{center}
\end{figure}

\begin{figure}[tbp]
  \begin{center}
    \subfloat[Solution at $t=11$. \label{fig:naca_a}]{\includegraphics[width=.45\linewidth]{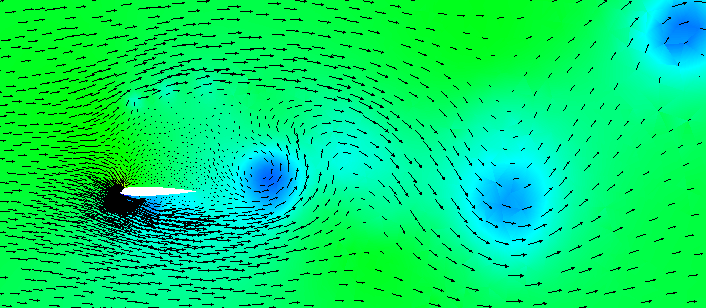}}
    \
    \subfloat[Solution at $t=11.13$.]{\includegraphics[width=.45\linewidth]{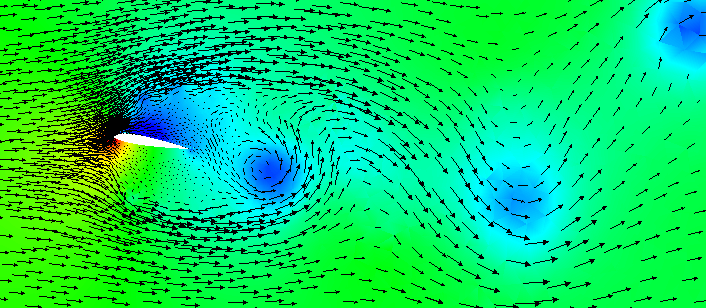}}
    \\
    \subfloat[Solution at $t=11.25$.]{\includegraphics[width=.45\linewidth]{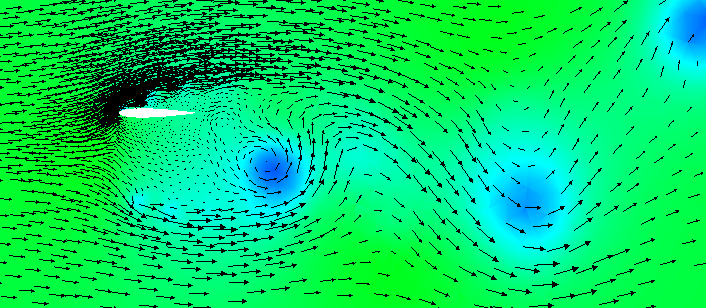}}
    \
    \subfloat[Solution at $t=11.37$.]{\includegraphics[width=.45\linewidth]{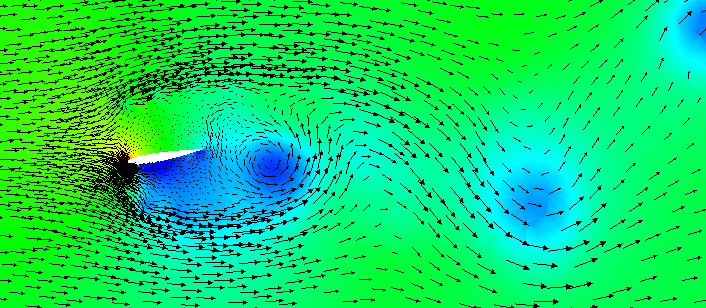}}
    \\
    \subfloat[Solution at $t=11.5$.]{\includegraphics[width=.45\linewidth]{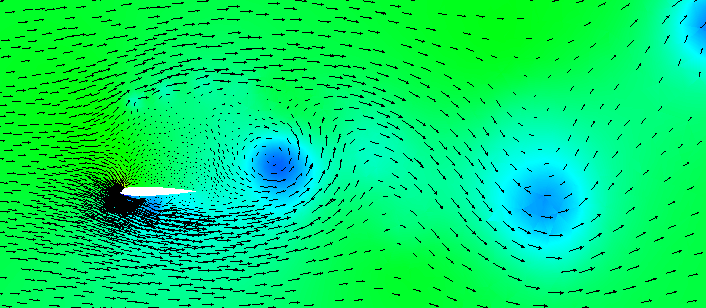}}
    \
    \subfloat[Solution at $t=13$.]{\includegraphics[width=.45\linewidth]{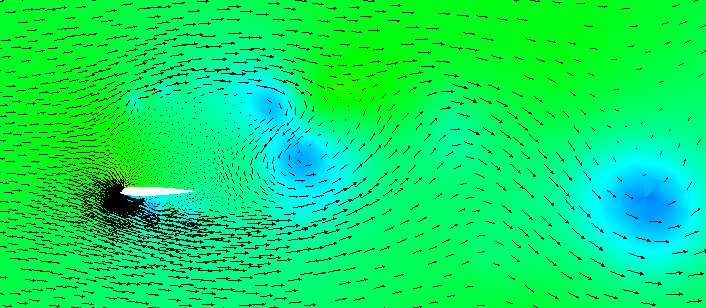}}
    \caption{Velocity vector plot and pressure field around a pitching
      and plunging NACA0012 airfoil, see \cref{ss:naca0012}. From left
      to right and top to bottom the first five pictures show the
      vortices within one cycle of the airfoil motion. The last
      picture shows the vortices three cycles later.}
    \label{fig:naca}
\end{center}
\end{figure}

%------------------------------------------------------------------------------
\section{Conclusions}
\label{s:conclusions}

We presented a space-time embedded-hybridized and a space-time
embedded discontinuous Galerkin finite element method for the
Navier--Stokes equations on moving/deforming domains. Both of these
schemes guarantee a point-wise divergence-free velocity field and are
shown to be energy-stable, even on time-dependent domains. Although
the space-time embedded discontinuous Galerkin method has fewer
globally coupled degrees-of-freedom than the space-time
embedded-hybridized discontinuous Galerkin method, only the latter
discretization conserves mass locally. We have shown the performance
of these methods in terms of rates of convergence, and flow simulations
around a fixed and a moving cylinder and a pitching and plunging
airfoil.

%------------------------------------------------------------------------------
\subsubsection*{Acknowledgements}

SR gratefully acknowledges support from the Natural Sciences and
Engineering Research Council of Canada through the Discovery Grant
program (RGPIN-05606-2015).

%------------------------------------------------------------------------------
\bibliographystyle{elsarticle-num-names}
\bibliography{references}

\begin{thebibliography}{41}
\expandafter\ifx\csname natexlab\endcsname\relax\def\natexlab#1{#1}\fi
\providecommand{\url}[1]{\texttt{#1}}
\providecommand{\href}[2]{#2}
\providecommand{\path}[1]{#1}
\providecommand{\DOIprefix}{doi:}
\providecommand{\ArXivprefix}{arXiv:}
\providecommand{\URLprefix}{URL: }
\providecommand{\Pubmedprefix}{pmid:}
\providecommand{\doi}[1]{\href{http://dx.doi.org/#1}{\path{#1}}}
\providecommand{\Pubmed}[1]{\href{pmid:#1}{\path{#1}}}
\providecommand{\bibinfo}[2]{#2}
\ifx\xfnm\relax \def\xfnm[#1]{\unskip,\space#1}\fi
%Type = Article
\bibitem[{Lesoinne and Farhat(1996)}]{Lesoinne:1996}
\bibinfo{author}{M.~Lesoinne}, \bibinfo{author}{C.~Farhat},
\newblock \bibinfo{title}{Geometric conservation laws for flow problems with
  moving boundaries and deformable meshes, and their impact on aeroelastic
  computations},
\newblock \bibinfo{journal}{Comput. Methods. Appl. Mech. Engrg.}
  \bibinfo{volume}{134} (\bibinfo{year}{1996}) \bibinfo{pages}{71--90}.
  \DOIprefix\doi{10.1016/0045-7825(96)01028-6}.
%Type = Article
\bibitem[{Guillard and Farhat(2000)}]{Guillard:2000}
\bibinfo{author}{H.~Guillard}, \bibinfo{author}{C.~Farhat},
\newblock \bibinfo{title}{On the significance of the geometric conservation law
  for flow computations on moving meshes},
\newblock \bibinfo{journal}{Comput. Methods. Appl. Mech. Engrg.}
  \bibinfo{volume}{190} (\bibinfo{year}{2000}) \bibinfo{pages}{1467--1482}.
  \DOIprefix\doi{10.1016/S0045-7825(00)00173-0}.
%Type = Article
\bibitem[{Ivan\v{c}i\'c et~al.(2019)Ivan\v{c}i\'c, Sheu, and
  Solovchuk}]{Ivancic:2019}
\bibinfo{author}{F.~Ivan\v{c}i\'c}, \bibinfo{author}{T.~W.-H. Sheu},
  \bibinfo{author}{M.~Solovchuk},
\newblock \bibinfo{title}{Arbitrary {L}agrangian {E}ulerian-type finite element
  methods formulation for {PDE}s on time-dependent domains with vanishing
  discrete space conservation law},
\newblock \bibinfo{journal}{SIAM J. Sci. Comput.} \bibinfo{volume}{41}
  (\bibinfo{year}{2019}) \bibinfo{pages}{A1548--A1573}.
  \DOIprefix\doi{10.1137/18M1214494}.
%Type = Article
\bibitem[{Persson et~al.(2009)Persson, Bonet, and Peraire}]{Persson:2009}
\bibinfo{author}{P.-O. Persson}, \bibinfo{author}{J.~Bonet},
  \bibinfo{author}{J.~Peraire},
\newblock \bibinfo{title}{Discontinuous {G}alerkin solution of the
  {N}avier--{S}tokes equations on deformable domains},
\newblock \bibinfo{journal}{Comput. Methods. Appl. Mech. Engrg.}
  \bibinfo{volume}{198} (\bibinfo{year}{2009}) \bibinfo{pages}{1585--1595}.
  \DOIprefix\doi{10.1016/j.cma.2009.01.012}.
%Type = Article
\bibitem[{Eriksson et~al.(1985)Eriksson, Johnson, and Thom\'ee}]{Eriksson:1985}
\bibinfo{author}{K.~Eriksson}, \bibinfo{author}{C.~Johnson},
  \bibinfo{author}{V.~Thom\'ee},
\newblock \bibinfo{title}{Time discretization of parabolic problems by the
  discontinuous {G}alerkin method},
\newblock \bibinfo{journal}{RAIRO Mod\'el. Math. Anal. Num\'er.}
  \bibinfo{volume}{19} (\bibinfo{year}{1985}) \bibinfo{pages}{611--643}.
%Type = Article
\bibitem[{Jamet(1978)}]{Jamet:1978}
\bibinfo{author}{P.~Jamet},
\newblock \bibinfo{title}{Galerkin-type approximations which are discontinuous
  in time for parabolic equations in a variable domain},
\newblock \bibinfo{journal}{SIAM J. Numer. Anal.} \bibinfo{volume}{15}
  (\bibinfo{year}{1978}) \bibinfo{pages}{912--928}.
  \DOIprefix\doi{10.1137/0715059}.
%Type = Article
\bibitem[{Johnson and Tezduyar(1994)}]{Johnson:1994}
\bibinfo{author}{A.~A. Johnson}, \bibinfo{author}{T.~E. Tezduyar},
\newblock \bibinfo{title}{Mesh update strategies in parallel finite element
  computations of flow problems with moving boundaries and interfaces},
\newblock \bibinfo{journal}{Comput. Methods. Appl. Mech. Engrg.}
  \bibinfo{volume}{119} (\bibinfo{year}{1994}) \bibinfo{pages}{73--94}.
  \DOIprefix\doi{10.1016/0045-7825(94)00077-8}.
%Type = Article
\bibitem[{Masud and Hughes(1997)}]{Masud:1997}
\bibinfo{author}{A.~Masud}, \bibinfo{author}{T.~J.~R. Hughes},
\newblock \bibinfo{title}{A space-time {G}alerkin/least-squares finite element
  formulation of the {N}avier-{S}tokes equations for moving domain problems},
\newblock \bibinfo{journal}{Comput. Methods. Appl. Mech. Engrg.}
  \bibinfo{volume}{146} (\bibinfo{year}{1997}) \bibinfo{pages}{91--126}.
  \DOIprefix\doi{10.1016/S0045-7825(96)01222-4}.
%Type = Article
\bibitem[{Tezduyar et~al.(1992)Tezduyar, Behr, and Liou}]{Tezduyar:1992a}
\bibinfo{author}{T.~E. Tezduyar}, \bibinfo{author}{M.~Behr},
  \bibinfo{author}{J.~Liou},
\newblock \bibinfo{title}{A new strategy for finite element computations
  involving moving boundaries and interfaces--the {DSD/ST} procedure: {I}. the
  concept and the preliminary numerical tests},
\newblock \bibinfo{journal}{Comput. Methods. Appl. Mech. Engrg.}
  \bibinfo{volume}{94} (\bibinfo{year}{1992}) \bibinfo{pages}{339--351}.
  \DOIprefix\doi{10.1016/0045-7825(92)90059-S}.
%Type = Article
\bibitem[{N'dri et~al.(2001)N'dri, Garon, and Fortin}]{Ndri:2001}
\bibinfo{author}{D.~N'dri}, \bibinfo{author}{A.~Garon},
  \bibinfo{author}{A.~Fortin},
\newblock \bibinfo{title}{A new stable space--time formulation for
  two-dimensional and three-dimensional incompressible viscous flow},
\newblock \bibinfo{journal}{Int. J. Numer. Meth. Fluids} \bibinfo{volume}{37}
  (\bibinfo{year}{2001}) \bibinfo{pages}{865--884}.
  \DOIprefix\doi{10.1002/fld.174}.
%Type = Article
\bibitem[{N'dri et~al.(2002)N'dri, Garon, and Fortin}]{Ndri:2002}
\bibinfo{author}{D.~N'dri}, \bibinfo{author}{A.~Garon},
  \bibinfo{author}{A.~Fortin},
\newblock \bibinfo{title}{Incompressible {N}avier--{S}tokes computations with
  stable and stabilized space--time formulations: a comparative study},
\newblock \bibinfo{journal}{Commun. Numer. Meth. Engng.} \bibinfo{volume}{18}
  (\bibinfo{year}{2002}) \bibinfo{pages}{495--512}.
  \DOIprefix\doi{10.1002/cnm.507}.
%Type = Article
\bibitem[{{van der Vegt} and {van der Ven}(2002)}]{Vegt:2002}
\bibinfo{author}{J.~J.~W. {van der Vegt}}, \bibinfo{author}{H.~{van der Ven}},
\newblock \bibinfo{title}{Space--time discontinuous {G}alerkin finite element
  method with dynamic grid motion for inviscid compressible flow},
\newblock \bibinfo{journal}{J. Comput. Phys.} \bibinfo{volume}{182}
  (\bibinfo{year}{2002}) \bibinfo{pages}{546--585}.
  \DOIprefix\doi{10.1006/jcph.2002.7185}.
%Type = Article
\bibitem[{Rhebergen et~al.(2013)Rhebergen, Cockburn, and van~der
  Vegt}]{Rhebergen:2013b}
\bibinfo{author}{S.~Rhebergen}, \bibinfo{author}{B.~Cockburn},
  \bibinfo{author}{J.~van~der Vegt},
\newblock \bibinfo{title}{A space--time discontinuous {G}alerkin method for the
  incompressible {N}avier--{S}tokes equations},
\newblock \bibinfo{journal}{J. Comput. Phys.} \bibinfo{volume}{233}
  (\bibinfo{year}{2013}) \bibinfo{pages}{339--358}.
  \DOIprefix\doi{10.1016/j.jcp.2012.08.052}.
%Type = Article
\bibitem[{van~der Vegt and Sudirham(2008)}]{Vegt:2008}
\bibinfo{author}{J.~van~der Vegt}, \bibinfo{author}{J.~Sudirham},
\newblock \bibinfo{title}{A space--time discontinuous {G}alerkin method for the
  time-dependent {O}seen equations},
\newblock \bibinfo{journal}{Appl. Numer. Math} \bibinfo{volume}{58}
  (\bibinfo{year}{2008}) \bibinfo{pages}{1892--1917}.
  \DOIprefix\doi{10.1016/j.apnum.2007.11.010}.
%Type = Article
\bibitem[{Rhebergen and Cockburn(2012)}]{Rhebergen:2012}
\bibinfo{author}{S.~Rhebergen}, \bibinfo{author}{B.~Cockburn},
\newblock \bibinfo{title}{A space-time hybridizable discontinuous {G}alerkin
  method for incompressible flows on deforming domains},
\newblock \bibinfo{journal}{J. Comput. Phys.} \bibinfo{volume}{231}
  (\bibinfo{year}{2012}) \bibinfo{pages}{4185--4204}.
  \DOIprefix\doi{10.1016/j.jcp.2012.02.011}.
%Type = Inproceedings
\bibitem[{Rhebergen and Cockburn(2013)}]{Rhebergen:2013a}
\bibinfo{author}{S.~Rhebergen}, \bibinfo{author}{B.~Cockburn},
\newblock \bibinfo{title}{Space--time hybridizable discontinuous {G}alerkin
  method for the advection--diffusion equation on moving and deforming meshes},
\newblock in: \bibinfo{editor}{C.~de~Moura}, \bibinfo{editor}{C.~Kubrusly}
  (Eds.), \bibinfo{booktitle}{The {C}ourant--{F}riedrichs--{L}ewy ({CFL})
  condition, 80 years after its discovery},
  \bibinfo{publisher}{Birkh{\"{a}}user Science}, \bibinfo{year}{2013}, pp.
  \bibinfo{pages}{45--63}. \DOIprefix\doi{10.1007/978-0-8176-8394-8_4}.
%Type = Article
\bibitem[{Kirk et~al.(2019)Kirk, Horvath, Cesmelioglu, and
  Rhebergen}]{Kirk:2019}
\bibinfo{author}{K.~L.~A. Kirk}, \bibinfo{author}{T.~L. Horvath},
  \bibinfo{author}{A.~Cesmelioglu}, \bibinfo{author}{S.~Rhebergen},
\newblock \bibinfo{title}{Analysis of a space--time hybridizable discontinuous
  {G}alerkin method for the advection--diffusion problem on time-dependent
  domains},
\newblock \bibinfo{journal}{SIAM J. Numer. Anal.} \bibinfo{volume}{57}
  (\bibinfo{year}{2019}) \bibinfo{pages}{1677--1696}.
  \DOIprefix\doi{10.1137/18M1202049}.
%Type = Article
\bibitem[{Cockburn et~al.(2009)Cockburn, Gopalakrishnan, and
  Lazarov}]{Cockburn:2009a}
\bibinfo{author}{B.~Cockburn}, \bibinfo{author}{J.~Gopalakrishnan},
  \bibinfo{author}{R.~Lazarov},
\newblock \bibinfo{title}{Unified hybridization of discontinuous {G}alerkin,
  mixed, and continuous {G}alerkin methods for second order elliptic problems},
\newblock \bibinfo{journal}{SIAM J. Numer. Anal.} \bibinfo{volume}{47}
  (\bibinfo{year}{2009}) \bibinfo{pages}{1319--1365}.
  \DOIprefix\doi{10.1137/070706616}.
%Type = Article
\bibitem[{Nguyen and Peraire(2012)}]{Nguyen:2012}
\bibinfo{author}{N.~C. Nguyen}, \bibinfo{author}{J.~Peraire},
\newblock \bibinfo{title}{Hybridizable discontinuous {G}alerkin methods for
  partial differential equations in continuum mechanics},
\newblock \bibinfo{journal}{J. Comput. Phys.} \bibinfo{volume}{231}
  (\bibinfo{year}{2012}) \bibinfo{pages}{5955--5988}.
  \DOIprefix\doi{10.1016/j.jcp.2012.02.033}.
%Type = Article
\bibitem[{Cesmelioglu et~al.(2017)Cesmelioglu, Cockburn, and
  Qiu}]{Cesmelioglu:2017}
\bibinfo{author}{A.~Cesmelioglu}, \bibinfo{author}{B.~Cockburn},
  \bibinfo{author}{W.~Qiu},
\newblock \bibinfo{title}{Analysis of a hybridizable discontinuous {G}alerkin
  method for the steady-state incompressible {N}avier--{S}tokes equations},
\newblock \bibinfo{journal}{Math. Comp.} \bibinfo{volume}{86}
  (\bibinfo{year}{2017}) \bibinfo{pages}{1643--1670}.
  \DOIprefix\doi{10.1090/mcom/3195}.
%Type = Article
\bibitem[{Fu(2019)}]{Fu:2019}
\bibinfo{author}{G.~Fu},
\newblock \bibinfo{title}{An explicit divergence-free {DG} method for
  incompressible flow},
\newblock \bibinfo{journal}{Comput. Methods Appl. Mech. Engrg.}
  \bibinfo{volume}{345} (\bibinfo{year}{2019}) \bibinfo{pages}{502--517}.
  \URLprefix \url{https://doi.org/10.1016/j.cma.2018.11.012}.
%Type = Article
\bibitem[{Labeur and Wells(2012)}]{Labeur:2012}
\bibinfo{author}{R.~J. Labeur}, \bibinfo{author}{G.~N. Wells},
\newblock \bibinfo{title}{Energy stable and momentum conserving hybrid finite
  element method for the incompressible {N}avier--{S}tokes equations},
\newblock \bibinfo{journal}{SIAM J. Sci. Comput.} \bibinfo{volume}{34}
  (\bibinfo{year}{2012}) \bibinfo{pages}{A889--A913}.
  \DOIprefix\doi{10.1137/100818583}.
%Type = Article
\bibitem[{Lehrenfeld and Sch\"oberl(2016)}]{Lehrenfeld:2016}
\bibinfo{author}{C.~Lehrenfeld}, \bibinfo{author}{J.~Sch\"oberl},
\newblock \bibinfo{title}{High order exactly divergence-free hybrid
  discontinuous {G}alerkin methods for unsteady incompressible flows},
\newblock \bibinfo{journal}{Comput. Methods Appl. Mech. Engrg.}
  \bibinfo{volume}{307} (\bibinfo{year}{2016}) \bibinfo{pages}{339--361}.
  \DOIprefix\doi{10.1016/j.cma.2016.04.025}.
%Type = Article
\bibitem[{Nguyen et~al.(2011)Nguyen, Peraire, and Cockburn}]{Nguyen:2011}
\bibinfo{author}{N.~C. Nguyen}, \bibinfo{author}{J.~Peraire},
  \bibinfo{author}{B.~Cockburn},
\newblock \bibinfo{title}{An implicit high--order hybridizable discontinuous
  {G}alerkin method for the incompressible {N}avier--{S}tokes equations},
\newblock \bibinfo{journal}{J. Comput. Phys.} \bibinfo{volume}{230}
  (\bibinfo{year}{2011}) \bibinfo{pages}{1147--1170}.
  \DOIprefix\doi{10.1016/j.jcp.2010.10.032}.
%Type = Article
\bibitem[{Qiu and Shi(2016)}]{Qiu:2016}
\bibinfo{author}{W.~Qiu}, \bibinfo{author}{K.~Shi},
\newblock \bibinfo{title}{A superconvergent {HDG} method for the incompressible
  {N}avier--{S}tokes equations on general polyhedral meshes},
\newblock \bibinfo{journal}{IMA J. Numer. Anal.} \bibinfo{volume}{36}
  (\bibinfo{year}{2016}) \bibinfo{pages}{1943--1967}. \URLprefix
  \url{http://dx.doi.org/10.1093/imanum/drv067}.
%Type = Article
\bibitem[{Rhebergen and Wells(2017)}]{Rhebergen:2017}
\bibinfo{author}{S.~Rhebergen}, \bibinfo{author}{G.~N. Wells},
\newblock \bibinfo{title}{Analysis of a hybridized/interface stabilized finite
  element method for the {S}tokes equations},
\newblock \bibinfo{journal}{SIAM J. Numer. Anal.} \bibinfo{volume}{55}
  (\bibinfo{year}{2017}) \bibinfo{pages}{1982--2003}.
  \DOIprefix\doi{10.1137/16M1083839}.
%Type = Article
\bibitem[{Rhebergen and Wells(2018)}]{Rhebergen:2018}
\bibinfo{author}{S.~Rhebergen}, \bibinfo{author}{G.~N. Wells},
\newblock \bibinfo{title}{A hybridizable discontinuous {G}alerkin method for
  the {N}avier--{S}tokes equations with pointwise divergence-free velocity
  field},
\newblock \bibinfo{journal}{J. Sci. Comput.}  (\bibinfo{year}{2018}).
  \DOIprefix\doi{10.1007/s10915-018-0671-4}.
%Type = Article
\bibitem[{Schroeder and Lube(2018)}]{Schroeder:2018}
\bibinfo{author}{P.~Schroeder}, \bibinfo{author}{G.~Lube},
\newblock \bibinfo{title}{Divergence-free {H}(div)-fem for time-dependent
  incompressible flows with applications to high {R}eynolds number vortex
  dynamics},
\newblock \bibinfo{journal}{J. Sci. Comput.} \bibinfo{volume}{75}
  (\bibinfo{year}{2018}) \bibinfo{pages}{830--858}. \URLprefix
  \url{https://doi.org/10.1007/s10915-017-0561-1}.
%Type = Article
\bibitem[{Horvath and Rhebergen(2019)}]{Horvath:2019}
\bibinfo{author}{T.~L. Horvath}, \bibinfo{author}{S.~Rhebergen},
\newblock \bibinfo{title}{A locally conservative and energy-stable finite
  element method for the {N}avier-{S}tokes problem on time-dependent domains},
\newblock \bibinfo{journal}{Int. J. Numer. Meth. Fluids} \bibinfo{volume}{89}
  (\bibinfo{year}{2019}) \bibinfo{pages}{519--532}.
  \DOIprefix\doi{10.1002/fld.4707}.
%Type = Article
\bibitem[{Rhebergen and Wells(2019)}]{Rhebergen:2019}
\bibinfo{author}{S.~Rhebergen}, \bibinfo{author}{G.~N. Wells},
\newblock \bibinfo{title}{An embedded-hybridized discontinuous {G}alerkin
  finite element method for the {S}tokes equations},
\newblock \bibinfo{journal}{Comput. Methods Appl. Mech. Engrg.}
  (\bibinfo{year}{2019}). \URLprefix \url{https://arxiv.org/abs/1811.09194},
  \bibinfo{note}{to appear}.
%Type = Article
\bibitem[{Cesmelioglu et~al.(2019)Cesmelioglu, Rhebergen, and
  Wells}]{Cesmelioglu:2019}
\bibinfo{author}{A.~Cesmelioglu}, \bibinfo{author}{S.~Rhebergen},
  \bibinfo{author}{G.~N. Wells},
\newblock \bibinfo{title}{An embedded-hybridized discontinuous galerkin method
  for the coupled stokes-darcy system}  (\bibinfo{year}{2019}). \URLprefix
  \url{https://arxiv.org/abs/1905.09753}, \bibinfo{note}{submitted}.
%Type = Article
\bibitem[{{van der Ven}(2008)}]{Ven:2008}
\bibinfo{author}{H.~{van der Ven}},
\newblock \bibinfo{title}{An adaptive multitime multigrid algorithm for
  time-periodic flow simulations},
\newblock \bibinfo{journal}{J. Comput. Phys.} \bibinfo{volume}{227}
  (\bibinfo{year}{2008}) \bibinfo{pages}{5286--5303}.
  \DOIprefix\doi{10.1016/j.jcp.2008.01.039}.
%Type = Article
\bibitem[{Wells(2011)}]{Wells:2011}
\bibinfo{author}{G.~N. Wells},
\newblock \bibinfo{title}{Analysis of an interface stabilized finite element
  method: the advection-diffusion-reaction equation},
\newblock \bibinfo{journal}{SIAM J. Numer. Anal.} \bibinfo{volume}{49}
  (\bibinfo{year}{2011}) \bibinfo{pages}{87--109}.
  \DOIprefix\doi{10.1137/090775464}.
%Type = Misc
\bibitem[{Dobrev et~al.(2018)Dobrev, Kolev et~al.}]{mfem-library}
\bibinfo{author}{V.~A. Dobrev}, \bibinfo{author}{T.~V. Kolev}, et~al.,
  \bibinfo{title}{{MFEM}: Modular finite element methods},
  \bibinfo{howpublished}{\url{http://mfem.org}}, \bibinfo{year}{2018}.
%Type = Article
\bibitem[{Amestoy et~al.(2001)Amestoy, Duff, L'Excellent, and Koster}]{MUMPS:1}
\bibinfo{author}{P.~Amestoy}, \bibinfo{author}{I.~Duff}, \bibinfo{author}{J.-Y.
  L'Excellent}, \bibinfo{author}{J.~Koster},
\newblock \bibinfo{title}{A fully asynchronous multifrontal solver using
  distributed dynamic scheduling},
\newblock \bibinfo{journal}{SIAM J. Matrix Anal. \& Appl.} \bibinfo{volume}{23}
  (\bibinfo{year}{2001}) \bibinfo{pages}{15--41}.
  \DOIprefix\doi{10.1137/S0895479899358194}.
%Type = Article
\bibitem[{Amestoy et~al.(2006)Amestoy, Guermouche, L'Excellent, and
  Pralet}]{MUMPS:2}
\bibinfo{author}{P.~R. Amestoy}, \bibinfo{author}{A.~Guermouche},
  \bibinfo{author}{J.-Y. L'Excellent}, \bibinfo{author}{S.~Pralet},
\newblock \bibinfo{title}{Hybrid scheduling for the parallel solution of linear
  systems},
\newblock \bibinfo{journal}{Parallel Comput.} \bibinfo{volume}{32}
  (\bibinfo{year}{2006}) \bibinfo{pages}{136--156}.
  \DOIprefix\doi{10.1016/j.parco.2005.07.004}.
%Type = Misc
\bibitem[{Balay et~al.(2016{\natexlab{a}})Balay, Abhyankar, Adams, Brown,
  Brune, Buschelman, Dalcin, Eijkhout, Gropp, Kaushik, Knepley, {Curfman
  McInnes}, Rupp, Smith, Zampini, Zhang, and Zhang}]{petsc-web-page}
\bibinfo{author}{S.~Balay}, \bibinfo{author}{S.~Abhyankar},
  \bibinfo{author}{M.~F. Adams}, \bibinfo{author}{J.~Brown},
  \bibinfo{author}{P.~Brune}, \bibinfo{author}{K.~Buschelman},
  \bibinfo{author}{L.~Dalcin}, \bibinfo{author}{V.~Eijkhout},
  \bibinfo{author}{W.~D. Gropp}, \bibinfo{author}{D.~Kaushik},
  \bibinfo{author}{M.~G. Knepley}, \bibinfo{author}{L.~{Curfman McInnes}},
  \bibinfo{author}{K.~Rupp}, \bibinfo{author}{B.~F. Smith},
  \bibinfo{author}{S.~Zampini}, \bibinfo{author}{H.~Zhang},
  \bibinfo{author}{H.~Zhang}, \bibinfo{title}{{PETS}c {W}eb page},
  \bibinfo{howpublished}{\url{http://www.mcs.anl.gov/petsc}},
  \bibinfo{year}{2016}{\natexlab{a}}.
%Type = Techreport
\bibitem[{Balay et~al.(2016{\natexlab{b}})Balay, Abhyankar, Adams, Brown,
  Brune, Buschelman, Dalcin, Eijkhout, Gropp, Kaushik, Knepley, {Curfman
  McInnes}, Rupp, Smith, Zampini, Zhang, and Zhang}]{petsc-user-ref}
\bibinfo{author}{S.~Balay}, \bibinfo{author}{S.~Abhyankar},
  \bibinfo{author}{M.~F. Adams}, \bibinfo{author}{J.~Brown},
  \bibinfo{author}{P.~Brune}, \bibinfo{author}{K.~Buschelman},
  \bibinfo{author}{L.~Dalcin}, \bibinfo{author}{V.~Eijkhout},
  \bibinfo{author}{W.~D. Gropp}, \bibinfo{author}{D.~Kaushik},
  \bibinfo{author}{M.~G. Knepley}, \bibinfo{author}{L.~{Curfman McInnes}},
  \bibinfo{author}{K.~Rupp}, \bibinfo{author}{B.~F. Smith},
  \bibinfo{author}{S.~Zampini}, \bibinfo{author}{H.~Zhang},
  \bibinfo{author}{H.~Zhang}, \bibinfo{title}{{PETS}c Users Manual},
  \bibinfo{type}{Technical Report} \bibinfo{number}{ANL-95/11 - Revision 3.7},
  Argonne National Laboratory, \bibinfo{year}{2016}{\natexlab{b}}. \URLprefix
  \url{http://www.mcs.anl.gov/petsc}.
%Type = Inproceedings
\bibitem[{Balay et~al.(1997)Balay, Gropp, {Curfman McInnes}, and
  Smith}]{petsc-efficient}
\bibinfo{author}{S.~Balay}, \bibinfo{author}{W.~D. Gropp},
  \bibinfo{author}{L.~{Curfman McInnes}}, \bibinfo{author}{B.~F. Smith},
\newblock \bibinfo{title}{Efficient management of parallelism in object
  oriented numerical software libraries},
\newblock in: \bibinfo{editor}{E.~Arge}, \bibinfo{editor}{A.~M. Bruaset},
  \bibinfo{editor}{H.~P. Langtangen} (Eds.), \bibinfo{booktitle}{Modern
  Software Tools in Scientific Computing}, \bibinfo{organization}{$\ $},
  \bibinfo{publisher}{Birkh{\"{a}}user Press}, \bibinfo{year}{1997}, pp.
  \bibinfo{pages}{163--202}.
%Type = Inproceedings
\bibitem[{Sch\"afer et~al.(1996)Sch\"afer, Turek, Durst, Krause, and
  Rannacher}]{Schaefer:1996}
\bibinfo{author}{M.~Sch\"afer}, \bibinfo{author}{S.~Turek},
  \bibinfo{author}{F.~Durst}, \bibinfo{author}{E.~Krause},
  \bibinfo{author}{R.~Rannacher},
\newblock \bibinfo{title}{Benchmark computations of laminar flow around a
  cylinder},
\newblock in: \bibinfo{editor}{E.~H. Hirschel} (Ed.), \bibinfo{booktitle}{Flow
  Simulation with High-Performance Computers~II}, \bibinfo{year}{1996}, pp.
  \bibinfo{pages}{547--566}.
%Type = Article
\bibitem[{Calderer and Masud(2010)}]{Calderer:2010}
\bibinfo{author}{R.~Calderer}, \bibinfo{author}{A.~Masud},
\newblock \bibinfo{title}{A multiscale stabilized {ALE} formulation for
  incompressible flows with moving boundaries},
\newblock \bibinfo{journal}{Comput. Mech.} \bibinfo{volume}{46}
  (\bibinfo{year}{2010}) \bibinfo{pages}{185--197}.
  \DOIprefix\doi{10.1007/s00466-010-0487-z}.

\end{thebibliography}
%------------------------------------------------------------------------------
\end{document}